\newtheorem{theorem}{Theorem}[section]
\newtheorem{lemma}[theorem]{Lemma}
\theoremstyle{definition}
\newtheorem{definition}[theorem]{Definition}
\newtheorem{example}[theorem]{Example}
\DeclareMathOperator{\diam}{diam}
\DeclareMathOperator{\capacity}{Cap}
\newcommand*{\bR}{\ensuremath{\mathbb{R}}}
\newcommand*{\bN}{\ensuremath{\mathbb{N}}}
\newcommand*{\bdary}[1]{\partial #1}
\newcommand\numberthis{\addtocounter{equation}{1}\tag{\theequation}}
\def\Xint#1{\mathchoice
  {\XXint\displaystyle\textstyle{#1}}%
  {\XXint\textstyle\scriptstyle{#1}}%
  {\XXint\scriptstyle\scriptscriptstyle{#1}}%
  {\XXint\scriptscriptstyle\scriptscriptstyle{#1}}%
  \!\int}
\def\XXint#1#2#3{{\setbox0=\hbox{$#1{#2#3}{\int}$}
  \vcenter{\hbox{$#2#3$}}\kern-.5\wd0}}
\def\dashint{\Xint-}
\theoremstyle{remark}
\newtheorem{remark}[theorem]{Remark}
\numberwithin{equation}{section}
\newcommand{\abs}[1]{\lvert#1\rvert}
\begin{document}

\title[Regularity of the boundary extension in uniform domains]{Mappings of finite distortion: boundary extensions in uniform domains}

\subjclass[2010]{30C65,30C80,31B15}



\keywords{quasiregular mappings, mappings of finite distortion, weighted capacity, uniform domain, John domain, radial limits}

\author{Tuomo \"Akkinen}
\address[Tuomo \"Akkinen]{Department of Mathematics and Statistics, University of Jyv\"askyl\"a, P.O. Box 35, FI-40014 University of Jyv\"askyl\"a, Finland}
\email{tuomo.akkinen@jyu.fi}	

\author{Chang-Yu Guo}
\address[Chang-Yu Guo]{Department of Mathematics and Statistics, University of Jyv\"askyl\"a, P.O. Box 35, FI-40014 University of Jyv\"askyl\"a, Finland}
\email{changyu.c.guo@jyu.fi}
\thanks{C.Y.Guo was supported by the Magnus Ehrnrooth foundation.}


\begin{abstract}
In this paper, we consider mappings on uniform domains with exponentially integrable distortion whose Jacobian determinants are integrable. We show that such mappings can be extended to the boundary and moreover these extensions are exponentially integrable with quantitative bounds. This extends previous results of Chang and Marshall~\cite{CM} on analytic functions, Poggi-Corradini and Rajala~\cite{PR} and \"Akkinen and Rajala~\cite{Ae2} on mappings of bounded and finite distortion.
\end{abstract}

\maketitle

\section{Introduction}
A mapping $f\colon\Omega\to\bR^n$, on a domain $\Omega\subset\bR^n$ has finite distortion if the following conditions are fulfilled:
\begin{enumerate}[$(a)$]
 \item $f\in W^{1,1}_\text{loc}(\Omega,\bR^n)$,
 \item $J_f=\textup{det}(Df)\in L^1_\text{loc}(\Omega)$,
 \item there exists a measurable $K_f\colon\Omega\to[1,\infty)$, so that for almost every $x\in\Omega$ we have \[\abs{Df(x)}^n\leq K_f(x)J_f(x), \]
\end{enumerate}
where $|\cdot|$ is the operator norm.  If $K_f \leq K < \infty$ almost everywhere, we say that $f$ is $K$-quasiregular. If $n=2$ and $K=1$, we recover complex analytic functions. See \cite{Re}, \cite{Ri} and \cite{Vu} for the theory of quasiregular mappings, and \cite{HK}, \cite{IM} for the theory of mappings of finite distortion.

Let $\Omega$ be a bounded subdomain of $\bR^n$ with a distinguished point $x_0\in \Omega$. We will consider the collection of mappings of finite distortion $f\colon\Omega\to\bR^n$ satisfying $f(x_0)=0$ and
\begin{equation}\label{jacobi}
\int_\Omega J_f(x)\,dx\leq \abs{\Omega},
\end{equation}
where $\abs{\Omega}$ denotes the Lebesgue $n$-measure of $\Omega$. Moreover, we assume that there are constants $\lambda, \mathcal{K}>0$ such that 
\begin{equation}
\label{perusexp}
\int_{\Omega}\exp(\lambda K_f)\,dx\leq\mathcal{K}. 
\end{equation}
Let $\mathcal{F}_{\lambda,\mathcal{K}}(\Omega)$ denote the class of mappings satisfying \eqref{jacobi} and \eqref{perusexp}. The class of $K$-quasiregular mappings with \eqref{jacobi} will be denoted by $\mathcal{F}_{K}(\Omega)$.

If $\mathcal{F}$ is the class of analytic functions on the unit disc $\mathbb{D}$ with $f(0)=0$ and 
\begin{equation*}
	\int_\mathbb{D}\abs{f'(z)}^2\,dA(z)\leq \pi.
\end{equation*}
In \cite[Corollary 1]{CM} Chang and Marshall proved the following sharp extension of an earlier result by Beurling \cite{B}, that is 
\begin{equation}\label{Moser}
\sup_{f\in\mathcal{F}}\int_0^{2\pi} \exp(\abs{\bar{f}(e^{i\theta})}^2)\,d\theta<\infty. 
\end{equation}
This result is sharp, since for Beurling functions $B_a\colon\mathbb{D}\to\mathbb{C}$, $0<a<1$,
$$
B_a(z)=\log\left(\frac{1}{1-az}\right)\log^{-1/2}\left(\frac{1}{1-a^2}\right)
$$
one can show that
$$
\lim_{a \to 0} \int_0^{2\pi} \exp(\gamma\abs{B_a(e^{i\theta})}^2)\,d\theta=\infty
$$
for every $\gamma>1$. After this many natural generalizations of \eqref{Moser} have been proved. First, Marshall \cite{Ma} gave another proof for \eqref{Moser} using different techniques than in \cite{CM}. Poggi-Corradini \cite{P} was able to shorten the proof of Marshall \cite{Ma} using so called ``egg-yolk principle". Poggi-Corradini and Rajala proved the ``egg-yolk principle" for quasiregular mappings in space and with this they showed in \cite[Theorem 1.1]{PR} that for $n\geq 2$ we have 
\begin{equation}\label{Qrmoser}
\sup_{f\in\mathcal{F}_K(\mathbb{B}^n)}\int_{\mathbb{S}^{n-1}} \exp(\alpha\abs{\bar{f}(\xi)}^\frac{n}{n-1})\,d\mathcal{H}^{n-1}(\xi)<\infty, 
\end{equation}
where the constant 
$$\alpha=(n-1)\left(\frac{n}{2K}\right)^\frac{1}{n-1}$$ 
is sharp in dimension $n=2$. Here $\mathbb{B}^n$ and $\mathbb{S}^{n-1}$ denote the unit ball and its boundary, respectively. Note that mappings $f\in \mathcal{F}_{\lambda,\mathcal{K}}(\mathbb{B}^n)$ can be extended to $\partial\mathbb{B}^n=\mathbb{S}^{n-1}$ using the fact that $f$ has radial limits a.e. on $\mathbb{S}^{n-1}$.
Recently it was shown by \"Akkinen and Rajala \cite{Ae2} that \eqref{Qrmoser} extends to the class $\mathcal{F}_{\lambda,\mathcal{K}}(\mathbb{B}^n)$ in the form
\begin{equation}\label{MOFDmoser}
\sup_{f\in\mathcal{F}_{\lambda,\mathcal{K}}(\mathbb{B}^n)}\int_{\mathbb{S}^{n-1}} \exp(\beta\abs{\bar{f}(\xi)})\,d\mathcal{H}^{n-1}(\xi)<\infty, 
\end{equation}
where $\beta=\beta(n,\mathcal{K}, \lambda)>0$. This result is sharp in terms of the exponent $1$ but the sharp constant $\beta$ is not known even in dimension $n=2$, see discussion in \cite[Section 4]{Ae2}. 

The purpose of this paper is to prove  \eqref{Qrmoser} and \eqref{MOFDmoser} for more general domains than the unit ball.  We will consider domains $\Omega$ that are uniform, the justification for this restriction is given in Section 3. The boundary of a uniform domain is known to have Hausdorff dimension strictly less than $n$ but for technical reasons we will assume more regularity than this. If $\nu$ is a positive Borel measure we say that $\partial\Omega$ is upper $s$-Ahfors regular with respect to $\nu$ if there exist constants $s\in(0,n)$ and $C>0$ such that for any $x\in\Omega$
\begin{equation}\label{sreg}
\nu\left(\partial\Omega\cap B(x,r)\right)\leq C r^s\quad\text{for}~0<r\leq\diam(\Omega).
\end{equation}
Our main theorems are the following:

\begin{theorem}\label{thm:regularity of boundary extension for qr}
Let $\Omega\subset \bR^n$, $n\geq 2$, be a $c$-uniform domain with center $x_0$ and $\nu$ any positive Borel measure satisfying \eqref{sreg}. Then there exists a constant $\alpha=\alpha(\Omega,n, K,c,s)>0$ such that 
\begin{align*}
\sup_{f\in \mathcal{F}_{K}(\Omega)}\int_{\bdary\Omega}\exp\Big(\alpha|\bar{f}(w)|^{n/(n-1)}\Big)d\nu(w)<\infty,
\end{align*} 
where $\bar{f}$ is the extension of $f$ to $\partial\Omega$ given in Definition~\ref{def:natural boundary extension}.
\end{theorem}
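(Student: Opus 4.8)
The plan is to reduce the statement on a uniform domain $\Omega$ to the situation of a ball via the internal (quasihyperbolic) geometry, then adapt the egg-yolk argument of Poggi--Corradini and Rajala combined with a weighted capacity estimate. First I would fix $f\in\mathcal{F}_K(\Omega)$ with $f(x_0)=0$. The key quantitative input is a modulus-of-continuity type bound: for a boundary point $w\in\partial\Omega$ and $0<r\le\diam(\Omega)$, I want to control the oscillation of $f$ on the part of $\Omega$ at internal distance $\sim r$ from $w$ by the weighted $n$-capacity of a suitable condenser. Concretely, using that $\Omega$ is $c$-uniform, one can find for each such $w$ a Whitney-type chain of balls $B_j\subset\Omega$ with $\diam B_j\sim 2^{-j}$ running from near $x_0$ to $w$, and the modulus method for mappings of finite distortion (the $K_f$ enters through the exponential integrability \eqref{perusexp} via a Hölder/Orlicz estimate on $\int|Df|^n$ over each $B_j$) gives
\[
|\bar f(w)|^{n/(n-1)} \;\lesssim\; \sum_{j} \left(\dashint_{2B_j}|Df|^n\right)^{1/(n-1)} \cdot (\text{geometric weights}),
\]
where the sum is essentially a discretized energy. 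The normalization \eqref{jacobi} plus $|Df|^n\le K_f J_f$ keeps $\int_\Omega|Df|^n\log$-type quantities bounded by $\mathcal{K}$ and $|\Omega|$.

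The second step is to turn this pointwise bound into an integral exponential bound against $\nu$. Here I would use the upper $s$-Ahlfors regularity \eqref{sreg}: it lets me estimate $\nu$ of the "shadow" on $\partial\Omega$ of a Whitney ball $B_j$ by $C(2^{-j})^s$, so that summing the contributions from all boundary pieces at a given scale is summable because $s<n$. The exponential integrability then follows from a distributional/John--Nirenberg style argument: one shows $\nu(\{w:|\bar f(w)|^{n/(n-1)}>t\})\le C e^{-\alpha t}$ for $t$ large, by a stopping-time decomposition along the Whitney chains (the egg-yolk principle: once the accumulated energy forces $|\bar f(w)|$ large, the set of such $w$ has exponentially small $\nu$-measure, because large energy must be concentrated and the total energy is bounded). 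Integrating $\int_{\partial\Omega}\exp(\alpha|\bar f|^{n/(n-1)})\,d\nu = \nu(\partial\Omega) + \alpha\int_0^\infty e^{\alpha t}\nu(\{|\bar f|^{n/(n-1)}>t\})\,dt$ then converges for $\alpha$ small enough depending only on the stated parameters.

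The main obstacle I expect is establishing the capacity/modulus estimate in the correct form on a uniform domain rather than a ball: on the ball one can use the radial structure directly, but in a uniform domain one must build the cigar/carrot chains, verify that the Whitney decomposition along them has bounded overlap and geometrically controlled radii, and check that the mapping-of-finite-distortion modulus inequality (which requires some a priori regularity — monotonicity of $|f|$, or at least the $ACL^n$-type Sobolev estimates that come from \eqref{perusexp}) survives being restricted to these chains. In particular one needs the weighted Poincaré/capacity estimate with the weight coming from $\exp(\lambda K_f)$, which is where the exponent $\lambda$ and the constant $\mathcal{K}$ enter $\alpha$; making the dependence $\alpha=\alpha(\Omega,n,K,c,s)$ explicit (and uniform over the whole class $\mathcal{F}_K(\Omega)$) is the delicate bookkeeping. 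A secondary technical point is justifying that the boundary extension $\bar f$ of Definition~\ref{def:natural boundary extension} is $\nu$-a.e.\ defined and agrees with the values produced by the chain estimate; this should follow from the existence of limits along the Whitney chains, which the energy bound guarantees for $\nu$-a.e.\ $w$.
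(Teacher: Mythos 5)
Your high-level outline (level sets, capacity, the egg-yolk principle) points in the right direction, but the concrete mechanism you propose does not close the argument, and the central estimate is asserted rather than proved. The paper's proof runs the capacity--modulus duality on the level set $F_u=\{w\in\partial\Omega:|\bar f(w)|\ge u\}$: an \emph{upper} bound for $\mathrm{Mod}(\Gamma_{F_u})$ via the test function built from the area function $\mathcal{A}_{n-1}f(E_t)$ together with the isoperimetric-type inequality $(u-1)^{n/(n-1)}\le\omega_n^{1/(n-1)}\int_1^u\mathcal{A}_{n-1}f(E_t)^{-1/(n-1)}dt$ (this is exactly where the normalization $\int_\Omega J_f\le|\Omega|$ and the sharp exponent $n/(n-1)$ enter), and a \emph{lower} bound $\mathrm{Cap}(B(x_0,r_0),F_u,\Omega)\ge C\bigl(\log(1/\mathcal{H}^1_\infty(F_u))\bigr)^{1-n}$ proved in the John domain by Whitney chains, shadows and a Frostman measure (Theorem~\ref{thm:main thm}). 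Comparing the two gives $\mathcal{H}^s_\infty(F_u)\le C_1\exp\bigl(-C_2 u^{n/(n-1)}\bigr)$, and \eqref{sreg} converts this into a bound on $\nu(F_u)$. In your plan, the sentence ``once the accumulated energy forces $|\bar f(w)|$ large, the set of such $w$ has exponentially small $\nu$-measure, because large energy must be concentrated and the total energy is bounded'' is precisely the statement that needs proof, and a ``John--Nirenberg style stopping time along Whitney chains'' is not a proof of it: you would still need the quantitative capacity lower bound in terms of the Hausdorff content of the bad set (or, alternatively, a genuine Adams/Trudinger--Moser trace inequality for the discretized Riesz potential $\sum_j(\int_{B_j}J_f)^{1/n}$ against the measure $\nu$, which is the Cianchi-type route the authors explicitly mention and set aside). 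Moreover, your pointwise chain bound $|\bar f(w)|^{n/(n-1)}\lesssim\sum_j(\cdots)^{1/(n-1)}$, if summed naively, does not recover the sharp exponent $n/(n-1)$; the sharpness comes from the coarea/area-function structure, not from Cauchy--Schwarz along a single chain.

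Two further points. First, the statement you are proving is the quasiregular case $\mathcal{F}_K(\Omega)$, yet your proposal repeatedly invokes the exponential integrability of $K_f$, the weight $\exp(\lambda K_f)$, and constants $\lambda,\mathcal{K}$; for Theorem~\ref{thm:regularity of boundary extension for qr} the distortion is bounded by the constant $K$, the relevant capacity is the (essentially unweighted) $\mathrm{Cap}_{1/K}$, and the oscillation estimate is Lemma~\ref{oscilla}(i) without the logarithmic factor -- this distinction is what produces the exponent $n/(n-1)$ here versus the exponent $1$ in Theorem~\ref{thm:regularity of boundary extension}. Second, there is no reduction to the ball: the paper proves the capacity estimate directly in the John domain, and the role of the upper Ahlfors regularity \eqref{sreg} is only the final comparison $\nu(E)\le C\mathcal{H}^s_\infty(E)$, not a scale-by-scale shadow count as you suggest. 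Your last paragraph correctly identifies that the $\nu$-a.e.\ existence and uniqueness of the boundary values must be justified; in the paper this is Theorems~\ref{thm:extension} and~\ref{thm:uniqueness of limit in Lipschitz domains}, and it is here (not in the capacity estimate) that uniformity, as opposed to the John condition alone, is actually needed.
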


\begin{theorem}\label{thm:regularity of boundary extension}
Let $\Omega\subset \bR^n$, $n\geq 2$, be a $c$-uniform domain with center $x_0$ and $\nu$ any positive Borel measure satisfying \eqref{sreg}. Then there exists a constant $\beta=\beta(\Omega,n,\mathcal{K},\lambda,c,s)>0$ such that 
\begin{align*}
\sup_{f\in \mathcal{F}_{\lambda,\mathcal{K}}(\Omega)}\int_{\bdary\Omega}\exp\left(\beta|\bar{f}(w)|\right)d\nu(w)<\infty,
\end{align*} 
where $\bar{f}$ is the extension of $f$ to $\partial\Omega$  given in Definition~\ref{def:natural boundary extension}.
\end{theorem}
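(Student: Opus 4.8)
The plan is to reduce the general uniform-domain statement to the already-known ball case \eqref{MOFDmoser} by a combination of two standard devices: a Carleson-box / cone decomposition of the uniform domain adapted to its boundary, and a capacity (modulus of curve families) estimate that controls $|\bar f|$ on a boundary piece in terms of the behaviour of $f$ on an interior cone attached to it. First I would recall from Section~3 (the section promised in the introduction) the geometric facts about a $c$-uniform domain $\Omega$ with center $x_0$: every boundary point $w$ can be joined to $x_0$ by a cigar/John curve, and there is a Whitney-type decomposition of $\Omega$ into balls $B_i = B(x_i, r_i)$ with $r_i \simeq \dist(x_i,\partial\Omega)$ together with a carrot/cone $T_w \subset \Omega$ for each $w \in \partial\Omega$, along which $\dist(\cdot,\partial\Omega)$ grows linearly. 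The first real step is to define $\bar f(w)$ (Definition~\ref{def:natural boundary extension}) as the limit of $f$ along such a carrot $T_w$ and to show this limit exists for $\nu$-a.e.\ $w$; this will use that $f$ has finite distortion with \eqref{perusexp}, hence is in particular monotone / satisfies a modulus inequality, so that oscillation of $f$ over dyadic annuli of $T_w$ is summable off an exceptional set of capacity zero, which by \eqref{sreg} is $\nu$-null.

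The core estimate is a pointwise bound of the form
\begin{equation}\label{eq:plan-pointwise}
|\bar f(w)| \;\le\; C \Big( \sum_{j} \operatorname{osc}_{B_j(w)} f \Big) \;\le\; C\, \Big( \text{capacity/energy of a curve family in } \Omega \text{ landing at } w \Big)^{?},
\end{equation}
obtained exactly as in \cite{Ae2}, \cite{PR}: by the egg-yolk principle, the image $f(T_w)$ either stays bounded (harmless) or must contain a long curve, and the modulus of the curve family connecting a fixed "yolk" $B(x_0,\rho)$ to the far part of $T_w$ is estimated from above using the distortion inequality and the weighted capacity machinery, the weight being $\exp(\lambda K_f)$ integrated via the Chang--Marshall / Moser--Trudinger trick. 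The uniformity of $\Omega$ enters precisely to guarantee that these carrots have bounded overlap and comparable "shapes", so the local estimates can be summed. I would then integrate $\exp(\beta|\bar f(w)|)$ over $\partial\Omega$ against $\nu$, split $\partial\Omega$ into the Carleson boxes/shadows of the Whitney balls, and on each box use \eqref{eq:plan-pointwise} together with \eqref{sreg} (to convert $\nu$-mass of a box into $r^s$) and the integrability \eqref{jacobi}--\eqref{perusexp} of $f$ on the corresponding interior region; the exponential integrability with the \emph{linear} exponent $|\bar f|$ (rather than $|\bar f|^{n/(n-1)}$) comes, as in \cite{Ae2}, from the fact that exponential integrability of $K_f$ only buys a slightly-weaker-than-quasiregular modulus bound.

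The main obstacle I anticipate is the bookkeeping needed to pass from the ball to a uniform domain: on the ball one has exact radial limits and a clean one-parameter family of spheres $\mathbb{S}^{n-1}_r$ on which to run the capacity/length estimate, whereas on a uniform domain one must replace "radial segments" by John curves and "spheres" by the level sets of $\dist(\cdot,\partial\Omega)$, which are only quantitatively, not exactly, controlled. Making the modulus estimate uniform in $w$ — i.e.\ getting a single constant $\alpha$ (or $\beta$) depending only on $n, K, c, s$ and $\Omega$ — requires that the capacity of the relevant curve family in the carrot $T_w$ be bounded below by an absolute constant times a power of $\log(1/r)$, uniformly over $w$; this is where the precise definition of $c$-uniformity and the Ahlfors-regularity \eqref{sreg} of $\partial\Omega$ must be used in tandem, and where the proof is genuinely more delicate than in \cite{PR}, \cite{Ae2}. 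Everything else — the Moser--Trudinger estimate, the egg-yolk principle, the summation over Whitney boxes — is by now routine and can be cited or adapted from those papers.
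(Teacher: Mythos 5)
Your plan has the right ingredients at its periphery (boundary limits along John curves, the Poggi-Corradini--Rajala test-function/modulus machinery, the use of \eqref{sreg} to convert $\nu$-mass into Hausdorff content), but the central step is not one that works, and the actual key lemma is missing. Your ``core estimate'' is a pointwise bound of $|\bar f(w)|$ by a sum of oscillations along the carrot at $w$, to be exponentiated and integrated box by box. This does not close: exponential integrability of $\bar f$ is a statement about its distribution function, and a pointwise sum-of-oscillations bound carries no information about how large the set where that sum reaches size $u$ can be; already on the unit ball the Chang--Marshall, Poggi-Corradini--Rajala and \"Akkinen--Rajala proofs are level-set arguments, not pointwise ones. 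The paper instead estimates $\mathcal{H}^s_\infty(F_u)$ for $F_u=\{w\in\partial\Omega:|\bar f(w)|\ge u\}$ by trapping a weighted capacity between two bounds: from above by the weighted modulus of the curve family joining a fixed ball $B(x_0,r_0)$ to $F_u$ (the usual test function built from the areas $\mathcal{A}_{n-1}f(E_t)$ of the image level sets, plus H\"older), and from below by the new capacity estimate of Theorem~\ref{thm:main thm}, which gives $\capacity_{1/\omega}(E,Q_0,\Omega)\gtrsim \bigl(\log (1/\mathcal{H}^1_\infty(E))\bigr)^{-n}$ for compact $E\subset\partial\Omega$. Comparing the two yields exponential decay of $\mathcal{H}^s_\infty(F_u)$ in $u$, and the theorem follows by integration in $u$.

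The missing idea is precisely that lower capacity bound, which is where all the uniform-domain geometry is actually spent: one puts a Frostman measure $\mu$ on $E$ with gauge $h(t)=\log^{-n-1}(1/t)$, runs the chaining/Poincar\'e argument along the Whitney cubes met by the John curves from $x_0$ to points of $E$, absorbs the weight $1/\omega=1/K_f$ by Jensen's inequality against $\exp(\lambda K_f)\in L^1(\Omega)$ (this is what produces the extra factor $\log^{1/n}(1/\diam Q)$ and ultimately the linear, rather than $n/(n-1)$, exponent), and then sums over shadows using Lemmas~\ref{ShadowProperties} and~\ref{JohnMeas}. Without this --- or an equivalent ``egg-yolk in a uniform domain'' statement, which is the same capacity estimate in disguise --- the uniform-in-$w$ modulus lower bound you invoke in your last paragraph remains an unproved assertion, and it is the whole content of the theorem beyond \eqref{MOFDmoser}. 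Note also that no genuine reduction to the ball case is available: uniform domains in $\bR^n$, $n\ge 3$, need not be quasiconformally equivalent to $\mathbb{B}^n$, so the argument has to be carried out intrinsically in $\Omega$, as the paper does.
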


These results are sharp with respect to the exponent inside the exponential map since $\mathbb{B}^n$ is a particular example of a uniform domain. The sharp constants $\alpha$ and $\beta$ remain open, we would need new techniques for this problem. The general idea behind the proof is similar to the one used in the previous papers \cite{Ae2,PR}, essentially relying on a version of a Theorem of Beurling \cite{B} originally stated for analytic functions. Allowing more general domains produces a two-fold problem: first we have to find a way to extend our mapping to the boundary and also we need a new version of the Beurling Theorem.

It could be possible to deduce Theorem \ref{thm:regularity of boundary extension} from earlier works by Cianchi \cite{Ci} and \cite{Ci2} as follows: one first uses the Orlicz-Sobolev extension in uniform domains for each of the component functions $f_i$, $i=1,\dots,n$, to obtain a global Orlicz-Sobolev function $f_i':\bR^n\to \bR$; then we can choose a quasicontinuous representative of $f_i'$, which means outside a set of small capacity, $f_i'$ is continuous. This in return gives us a natural way to extend our original function $f_i$ to the boundary of $\Omega$ as $\bar{f}_i:\bdary\Omega\to \bR$. Now extending results for Orlicz-Sobolev functions in \cite{Ci2} to similar ones given for Lorentz-Sobolev functions in \cite[Theorems 2.4 and 2.5]{Ci} and using above arguments one can show that Theorem~\ref{thm:regularity of boundary extension} is valid for each component function $\bar{f}_i$. So Theorem~\ref{thm:regularity of boundary extension} follows once we set $\bar{f}=(\bar{f}_1,\dots,\bar{f}_n)$.

In this way, one could prove Theorem~\ref{thm:regularity of boundary extension} with a constant $\alpha$ independent of $\lambda$ and $\mathcal{K}$ and thus cannot obtain the sharp constant since the sharp constant depends on the distortion $K$; see~\cite{PR}. We believe that the techniques used here can be further extended to more irregular domain than uniform domains since most of our results can be generalized to domains satisfying \textit{quasihyperbolic boundary condition} (cf.~\cite{kot01}).

This paper is organized as follows. Section 2 fixes the notation. In section 3 we will show that $f$ can be extended to the boundary of a uniform domain along John-curves and that this extension is well-defined with a small exceptional set on $\partial\Omega$ in terms of Hausdorff gauges. Section 4 contains the main ingredients of the proofs of Theorems  \ref{thm:regularity of boundary extension for qr} and \ref{thm:regularity of boundary extension}, which is the capacity estimate given in Theorem \ref{thm:main thm}. Finally in Section 5 we will prove our main Theorems.




\section{Notation and preliminaries}
A curve $\gamma$ in $\Omega$ is a continuous mapping $\gamma\colon[0,1]\to\Omega$. The trace $\gamma([0,1])$ of a curve $\gamma$ is denoted by $\abs{\gamma}$. A curve $\gamma$ is said to connect points $x,y\in\Omega$ if $\gamma(0)=y$ and $\gamma(1)=x$ and points $x\in\Omega$, $y\in\partial\Omega$ if $\gamma(1)=x$, $\gamma((0,1])\subset\Omega$ and 
$$\lim_{t\to 0^+}\gamma(t)=y.$$
We also use the notation $l(\gamma)$ to denote the (euclidean) length of a curve $\gamma$.

The quasihyperbolic metric $k_\Omega$ in a domain $\Omega\subsetneq\bR^n$ is defined as
\begin{equation*}
    k_\Omega(x,y)=\inf_{\gamma}\int_\gamma \frac{ds}{d(z,\bdary\Omega)},
\end{equation*}
where the infimum is taken over all rectifiable curves $\gamma$ in $\Omega$ which join $x$ to $y$. This metric was introduced by Gehring and Palka in~\cite{gp76}. A curve $\gamma$ connecting $x$ to $y$ for which 
$$k_\Omega(x,y)=\int_\gamma \frac{ds}{d(z,\bdary\Omega)}$$ is called a quasihyperbolic geodesic. Quasihyperbolic geodesics joining any two points of a proper subdomain of $\bR^n$ always exists; see \cite[Lemma 1]{go79}. Given two points $x,y\in \Omega$, we denote by $[x,y]$ any quasihyperbolic geodesic that joins $x$ and $y$.


Let $\Omega\subset\bR^n$ be a domain. We will denote by $\mathscr{W}:=\mathscr{W}(\Omega)$ a Whitney type decomposition of $\Omega$, where $\mathscr{W}$ is a collection of closed cubes $Q\subset \Omega$ with pairwise disjoint interiors and having edges parallel to coordinate axis, such that 
$$\Omega=\bigcup_{Q\in\mathscr{W}}Q.$$ 
Moreover, there exists $\lambda_0>1$ such that the cubes dilated by factor $\lambda_0$ have uniformly bounded overlap, the diameters of $Q\in\mathscr{W}$ take values in $\{2^{-j}:j\in\mathbb{Z}\}$ and satisfy condition
$$\diam{Q}\leq d(Q,\partial\Omega) \leq 4 \diam{Q}.$$
We will also use the notation $$\mathscr{W}_j=\left\{Q\in\mathscr{W}:\diam{Q}=2^{-j}\right\}.$$

\section{Boundary extensions}\label{sec:boundary extension}
In the previous papers \cite{Ma}, \cite{PR}, \cite{PPR} and \cite{Ae2} the boundary extension of $f\in\mathcal{F}_{\lambda,\mathcal{K}}(\mathbb{B}^n)$ was defined via the radial limits of $f$ that exist at almost every point in $\mathbb{S}^{n-1}$ by absolute continuity on lines, \eqref{jacobi} and \eqref{perusexp}. In this section we prove similar results for more general class of  domains, namely for uniform domains.


\subsection{Existence of limits along curves}\label{subsec:exist limits}
When considering general bounded domains $\Omega$ there might be big parts of the boundary $\partial \Omega$ that are not accessible by a rectifiable curve inside  $\Omega$, see Figure \ref{fig:topocomb}.

\begin{center}
\begin{figure}[here]
\input{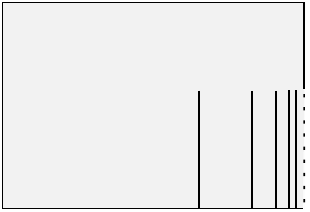}
\caption{Topologist comb}
\label{fig:topocomb}
\end{figure}
\end{center}

To avoid such situations, we restrict our considerations to John domains.
\begin{definition}
A bounded domain $\Omega\subset \bR^n$ is a $c$-John domain with distinguished point $x_0\in \Omega$ (center) if every point $x\in\Omega$ can be joined to $x_0$ by a $c$-John curve. Recall also that a rectifiable curve $\gamma\colon\mathopen[0,l\mathclose]\to\Omega$ parametrized by arc length is called a $c$-John curve joining $x$ to $x_0$ if it satisfies $\gamma(0)=x$, $\gamma(l)=x_0$ and
\begin{equation}\label{John}
d\left(\gamma(t),\partial\Omega\right)\geq\frac{1}{c}t
\end{equation}
for every $t\in\mathopen[0,l\mathclose]$.
\end{definition}
It follows from the lower semicontinuity of $l(\gamma)$ and Ascoli-Arzela Theorem that each boundary point can be connected to $x_0$ by a curve satisfying \eqref{John}. Thus every point on the boundary is accessible, even in a non-tangential manner. Notice that typically there will be uncountably many $c$-John curves between $x_0\in\Omega$ and $y\in\partial\Omega$. For the rest of this subsection the standing assumptions are: Let $\Omega$ be a $c_0$-John domain with center $x_0$, $\mathscr{W}$ a Whitney decomposition of $\Omega$ and $c\geq c_0$.

If  $\xi\in\partial\Omega$ then define $\textup{I}_c(\xi,x_0)$ as  the collection of all $c$-John curves connecting $\xi$ to $x_0$ and 
$$P_c(\xi)=\left\{Q\in\mathscr{W}:Q\cap\gamma\neq\emptyset~\text{for some $\gamma\in \textup{I}_c(\xi,x_0)$}\right\}.$$
For $Q\in\mathscr{W}$ and $E\subset \bdary\Omega$, we define the $c$-shadow of $Q$ on $E$ by
$$S_E^c(Q)=\left\{\xi\in E : Q\in P_c(\xi)\right\}.$$ 
When $E=\bdary\Omega$, we write $S^c(Q)$ instead of $S_{\bdary\Omega}^c(Q)$. We will omit the sub/superscripts $c$ in the above notation if $c=c_0$.

The following two lemmas are well-known and the proofs can be found, for instance, in ~\cite[Section 4]{G13}. 

\begin{lemma}\label{ShadowProperties}
Let $Q\in\mathscr{W}$ and  $\xi\in\partial\Omega$. Then $S^c(Q)$ is closed and there exists a constant $C(c)>0$ such that
$$\diam{S^c(Q)}\leq C(c)\diam{Q}.$$
Also there exists a constant $C(n,c)>0$ such that for any $j\in\mathbb{Z}$ 
$$\#\left\{Q\in\mathscr{W}_j:\xi\in S^c(Q)\right\}\leq C(n,c).$$
\end{lemma}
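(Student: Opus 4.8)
The plan is to verify the three assertions in order, all of them flowing from the $c$-John condition \eqref{John} together with the defining properties of a Whitney decomposition. For the diameter bound, fix $Q\in\mathscr{W}$ and two points $\xi_1,\xi_2\in S^c(Q)$. By definition there are $c$-John curves $\gamma_i\in\textup{I}_c(\xi_i,x_0)$ with $\gamma_i\cap Q\neq\emptyset$, say $\gamma_i(t_i)\in Q$. The key observation is that the distance from $\xi_i$ to $Q$ along $\gamma_i$ is comparable to $\diam Q$: on one hand, since $\gamma_i$ is parametrized by arc length and $\gamma_i(0)=\xi_i$, we have $\abs{\xi_i-\gamma_i(t_i)}\le t_i$; on the other hand, \eqref{John} gives $d(\gamma_i(t_i),\partial\Omega)\ge t_i/c$, while $\gamma_i(t_i)\in Q$ forces $d(\gamma_i(t_i),\partial\Omega)\le \diam Q + d(Q,\partial\Omega)\le 5\diam Q$ by the Whitney property $d(Q,\partial\Omega)\le 4\diam Q$. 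Hence $t_i\le 5c\,\diam Q$, so $\abs{\xi_i-\gamma_i(t_i)}\le 5c\,\diam Q$, and by the triangle inequality
\begin{equation*}
\abs{\xi_1-\xi_2}\le \abs{\xi_1-\gamma_1(t_1)}+\diam Q+\abs{\gamma_2(t_2)-\xi_2}\le (10c+1)\diam Q,
\end{equation*}
which gives the claim with $C(c)=10c+1$. That $S^c(Q)$ is closed follows from a standard compactness argument: if $\xi_k\in S^c(Q)$ converge to $\xi$, choose $c$-John curves $\gamma_k$ through $Q$; the estimate just proved bounds their lengths uniformly (the curves are contained in a fixed ball once one reaches the fixed point $x_0$, using $l(\gamma_k)\le c\,d(x_0,\partial\Omega)$ from \eqref{John} at $t=l$), so by Arzel\`a--Ascoli a subsequence converges uniformly to a curve $\gamma$; one checks $\gamma$ is still a $c$-John curve from $\xi$ to $x_0$ (the inequality \eqref{John} is closed under uniform limits, and the endpoint condition $\lim_{t\to 0^+}\gamma(t)=\xi$ survives) and that $\gamma$ still meets the closed cube $Q$, whence $\xi\in S^c(Q)$.

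For the bounded-multiplicity statement, fix $j\in\mathbb{Z}$, fix $\xi\in\partial\Omega$, and consider the family $\mathscr{Q}=\{Q\in\mathscr{W}_j:\xi\in S^c(Q)\}$. Each such $Q$ is met by some $c$-John curve $\gamma_Q$ from $\xi$ to $x_0$ at a parameter $t_Q$, and the computation above shows $t_Q\le 5c\,2^{-j}$ and also $t_Q\ge d(\gamma_Q(t_Q),\partial\Omega)-(\text{something})\gtrsim 2^{-j}$ -- more precisely, since $\gamma_Q(t_Q)\in Q$ we have $d(\gamma_Q(t_Q),\partial\Omega)\ge d(Q,\partial\Omega)\ge\diam Q=2^{-j}$, and combined with $\abs{\xi-\gamma_Q(t_Q)}\le t_Q$ we get both $\abs{\xi-\gamma_Q(t_Q)}\le 5c\,2^{-j}$ and $d(\gamma_Q(t_Q),\partial\Omega)\ge 2^{-j}$. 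Thus every cube in $\mathscr{Q}$ is a Whitney cube of diameter exactly $2^{-j}$ that intersects the annular region $\{z\in\Omega: d(z,\partial\Omega)\ge 2^{-j},\ \abs{z-\xi}\le 5c\,2^{-j}\}$. This region is contained in the ball $B(\xi,5c\,2^{-j})$, which has volume $\lesssim (c\,2^{-j})^n$; since Whitney cubes of a fixed generation have pairwise disjoint interiors and volume $\simeq 2^{-jn}$, a volume-counting argument bounds $\#\mathscr{Q}$ by a constant $C(n,c)$ depending only on $n$ and $c$.

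The main obstacle is the closedness claim: one must be careful that the Arzel\`a--Ascoli limit of $c$-John curves genuinely connects $\xi$ to $x_0$ in the precise sense defined in Section 2 (with $\gamma((0,1])\subset\Omega$ and $\lim_{t\to 0^+}\gamma(t)=\xi$), rather than merely a curve that touches $\partial\Omega$ prematurely; the John inequality \eqref{John} is exactly what rules this out, since along the limit curve $d(\gamma(t),\partial\Omega)\ge t/c>0$ for $t>0$. Everything else is elementary geometry of Whitney cubes, so I would present the diameter bound and the multiplicity bound in full and merely sketch the compactness argument, or -- as the excerpt suggests -- cite \cite[Section 4]{G13} for the details.
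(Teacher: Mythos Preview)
Your argument is correct and is precisely the standard John--Whitney geometry that one finds in the cited reference: the paper itself does not prove this lemma but simply defers to \cite[Section 4]{G13}, and what you have written is essentially that proof. There is nothing to add beyond noting that for the closedness you should reparametrize the curves $\gamma_k:[0,l_k]\to\overline{\Omega}$ to a common interval before invoking Arzel\`a--Ascoli, which is routine since the $l_k$ are uniformly bounded and the curves are $1$-Lipschitz.
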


\begin{lemma}\label{JohnMeas}
Assume that $\mu$ is a Borel measure on $\partial\Omega$ and $E\subset \partial\Omega$ measurable. Then there exists a constant $C(n,c)>0$ such that, for each $j\in\mathbb{Z}$ we have
$$\sum_{Q\in\mathscr{W}_j}\mu\left(S_E^c(Q)\right)\leq C\mu(E).$$
\end{lemma}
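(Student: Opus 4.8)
The plan is to exploit the bounded-overlap property of shadows recorded in Lemma~\ref{ShadowProperties}. Fix $j\in\mathbb{Z}$ and consider the family $\{S_E^c(Q):Q\in\mathscr{W}_j\}$ of subsets of $E$. I would first observe that, by definition, $S_E^c(Q)=S^c(Q)\cap E$, so each $S_E^c(Q)$ is a measurable subset of $E$, and hence $\mu(S_E^c(Q))=\int_E \mathbf{1}_{S^c(Q)}\,d\mu$. Summing over $Q\in\mathscr{W}_j$ and interchanging sum and integral (everything is nonnegative, so Tonelli applies),
\[
\sum_{Q\in\mathscr{W}_j}\mu\bigl(S_E^c(Q)\bigr)=\int_E\Bigl(\sum_{Q\in\mathscr{W}_j}\mathbf{1}_{S^c(Q)}(\xi)\Bigr)d\mu(\xi)
=\int_E \#\{Q\in\mathscr{W}_j:\xi\in S^c(Q)\}\,d\mu(\xi).
\]
The second part of Lemma~\ref{ShadowProperties} says precisely that the integrand is bounded pointwise by the constant $C(n,c)$, so the right-hand side is at most $C(n,c)\,\mu(E)$, which is the claim.

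The only point requiring a word of care is the measurability of $\xi\mapsto \#\{Q\in\mathscr{W}_j:\xi\in S^c(Q)\}$ and of each $S^c(Q)$; but $\mathscr{W}_j$ is countable and each $S^c(Q)$ is closed by the first assertion of Lemma~\ref{ShadowProperties}, hence Borel, so the counting function is a countable sum of Borel indicator functions and thus Borel measurable. I do not expect any genuine obstacle here: the lemma is a direct consequence of the uniform bound on the multiplicity of the shadow covering, and the one-line computation above is the whole argument. If one prefers to avoid invoking Tonelli, the same conclusion follows by splitting $\mathscr{W}_j$ according to the multiplicity and using that the sets where the multiplicity is at least $k$ are nested and empty for $k>C(n,c)$, but the integral form is cleanest.
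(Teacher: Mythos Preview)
Your argument is correct and is exactly the standard one: the paper does not give its own proof of this lemma but refers to~\cite[Section~4]{G13}, where the proof proceeds precisely by interchanging sum and integral and invoking the bounded-overlap bound from Lemma~\ref{ShadowProperties}. There is nothing to add.
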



Our aim is to prove that mappings in the classes $\mathcal{F}_K(\Omega)$ and $\mathcal{F}_{\lambda,K}(\Omega)$ have limits along $c_0$-John curves emanating from the center $x_0$. This will be true for big pieces of the boundary in terms of Hausdorff dimension. For this we introduce a notion of discrete length of a curve: Assume that $\xi\in\partial\Omega$ and $\gamma\in\textup{I}\mathopen(\xi,x_0\mathclose)$. Given a continuous mapping $f\colon\Omega\to\bR^n$ we define the discrete length of $f(\gamma)$ by
$$\ell_d \left[f(\gamma)\right]=\sum_{\substack{Q\in\mathscr{W}\\ Q\cap \gamma\neq\emptyset}} \diam{f(Q)}.$$
It follows from the definition and Lemma \ref{ShadowProperties} that if $\ell_d \left[f(\gamma)\right]<\infty$ then the limit
$$\lim_{t\to 0^+}f(\gamma(t))\in\mathbb{R}^n$$
exists.

We also need oscillation estimates for mappings of finite distortion $f\colon\Omega\to\bR^n$. The proof of the next lemma is contained in~\cite{KN}, see also~\cite{Ae}.
\begin{lemma}\label{oscilla}
Let $f\colon\Omega\to\bR^n$ be a mapping of finite distortion and assume $\sigma>1$. Then we have the following:
\begin{enumerate}[i)]
 \item If $f$ is quasiregular, then 
 $$\diam{f(B)}\leq C\left(\int_{\sigma B} J_f(x)\,dx\right)^{1/n}$$
 for every $B=B(x,r)$ for which $\sigma B\subset\subset\Omega$. Here constant $C>0$ depends only on $n$ and $K$. 
 \item If $f$ has exponentially integrable distortion, then
 $$\diam{f(B)}\leq C\left(\int_{\sigma B} J_f(x)\,dx\right)^{1/n}\log\left(1/\diam{B}\right)^{1/n}$$
 for every $B=B(x,r)$ for which $\sigma B\subset\subset\Omega$. Here constant $C>0$ depends only on $n$, $\lambda$ and $\int_\Omega\exp(\lambda K)\,dx$.
\end{enumerate}
\end{lemma}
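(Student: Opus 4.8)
The plan is to reduce both estimates to a single cross-scale (Schwarz-type) distortion estimate on the round ring $\sigma B\setminus B$, $B=B(a,r)$, and then invoke that estimate from \cite{KN} (see also \cite{Ae}). Under the stated hypotheses $f$ is continuous, sense-preserving, open and discrete and satisfies Lusin's condition $(N)$: this is Reshetnyak's theorem in case (i), and the corresponding continuity and openness results for mappings of finite distortion with $\exp(\lambda K_f)\in L^1_{\loc}$ and $J_f\in L^1_{\loc}$ in case (ii). Normalizing $f(a)=0$, set $L=\max_{\abs{x-a}=r}\abs{f(x)}$ and, for $\rho\in(r,\sigma r)$, $\ell(\rho)=\min_{\abs{x-a}=\rho}\abs{f(x)}$. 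Two soft reductions are then available. First, $\abs f$ obeys the maximum principle for open mappings, so $\sup_{\bar B}\abs f=L$, whence $\diam f(B)\le 2L$. Second, discreteness gives $\ell(\rho)>0$ for a.e.\ $\rho\in(r,\sigma r)$, and for such $\rho$ a degree argument yields $B(0,\ell(\rho))\subset f(B(a,\rho))$ — indeed $\deg(f,B(a,\rho),\cdot)$ is a positive integer-valued constant on the connected set $B(0,\ell(\rho))$, which contains $0$ and misses $f(\partial B(a,\rho))$ — so that, by the area formula and condition $(N)$, $\int_{\sigma B}J_f\ge\abs{f(B(a,\rho))}\ge\omega_n\ell(\rho)^n$. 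Combining the two, and recalling $\diam B=2r$, the lemma follows once one knows $L\le C\ell(\rho)$ in case (i), respectively $L\le C(\log(1/r))^{1/n}\ell(\rho)$ in case (ii), for a single admissible $\rho$, say $\rho\in[\tfrac{r+\sigma r}{2},\sigma r)$.

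This cross-scale estimate is the quasiregular Schwarz lemma in case (i) and its counterpart for mappings of exponentially integrable distortion in case (ii); both are established in \cite{KN} (see also \cite{Ae}) by comparing, through the condenser/modulus inequality for mappings of finite distortion, the fixed capacity $\omega_{n-1}(\log(\rho/r))^{1-n}$ of the round ring $B(a,\rho)\setminus\bar B(a,r)$ with the capacity of its image configuration, in which $f(\bar B(a,r))$ appears as a continuum through $0$ of diameter $\ge L$ whose contribution is bounded below by a Loewner--Teichm\"uller estimate. For $K$-quasiregular maps the modulus inequality is scale invariant and yields $L\le C(n,K)\ell(\rho)$. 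Under $\exp(\lambda K_f)\in L^1$ only a weighted, scale-non-invariant modulus inequality is available, and extracting its rate on the round ring — uniformly over $f\in\mathcal{F}_{\lambda,\mathcal{K}}(\Omega)$ — costs exactly one power of $\log(1/r)$, giving $L\le C(\log(1/r))^{1/n}\ell(\rho)$ with $C$ depending on $n$, $\lambda$ and $\int_\Omega\exp(\lambda K_f)\,dx$. In case (i) capacities can in fact be avoided: Gehring's higher integrability lemma places $\abs{Df}$ in $L^{n+\varepsilon}_{\loc}$ with a reverse-H\"older bound, and Morrey's embedding then gives $\diam f(B)\le\operatorname{osc}_B f\le C(n,K)\big(\int_{\sigma B}\abs{Df}^n\big)^{1/n}\le C(n,K)K^{1/n}\big(\int_{\sigma B}J_f\big)^{1/n}$.

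The step I expect to be the main obstacle is the case (ii) cross-scale estimate: one must pin down the logarithmic rate of the weighted modulus (equivalently, the exponential Sobolev-on-spheres) inequality for mappings of exponentially integrable distortion and, crucially, keep the resulting constant uniform over the whole class $\mathcal{F}_{\lambda,\mathcal{K}}(\Omega)$ — this is precisely where the two hypotheses $\int\exp(\lambda K_f)<\infty$ and $J_f\in L^1$ are genuinely used. Everything else (the maximum principle, the degree computation, the area formula, and, in case (i), Gehring's lemma together with Morrey's embedding) is routine.
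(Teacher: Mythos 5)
The paper offers no proof of this lemma (it is quoted from \cite{KN}, see also \cite{Ae}), so your argument has to stand on its own, and its central reduction fails. Your two ``soft'' steps are fine: the maximum principle for open maps gives $\diam f(B)\le 2L$, and the degree argument plus condition $(N)$ gives $\int_{\sigma B}J_f\ge \omega_n\ell(\rho)^n$. But the cross-scale estimate you then need, $L\le C(n,K)\,\ell(\rho)$ for some $\rho\in[\tfrac{(1+\sigma)r}{2},\sigma r)$, is false -- there is no such ``quasiregular Schwarz lemma''. Take $n=2$, $f(z)=z(z-\tfrac{7}{10})^m$ on $\mathbb{D}$ (analytic, hence $1$-quasiregular, $f(0)=0$), $B=B(0,\tfrac12)$, $\sigma=\tfrac32$. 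Then $L=\tfrac12(\tfrac65)^m$, while for every $\rho\in[\tfrac58,\tfrac34)$ one has $\ell(\rho)\le\rho\,\abs{\rho-\tfrac{7}{10}}^m\le\tfrac34(\tfrac{3}{40})^m$, so $L/\ell(\rho)\ge\tfrac23\cdot 16^m\to\infty$. The reason is that the modulus inequality in the direction you need (V\"ais\"al\"a's/the $K_O$-inequality) carries the multiplicity $N(f,\sigma B)$, which is not controlled by $K$ -- here it equals $m+1$ -- and that multiplicity is precisely what your reduction throws away when it replaces $\int_{\sigma B}J_f$ (which counts multiplicity) by the multiplicity-free bound $\omega_n\ell(\rho)^n$. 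This breaks case (ii) entirely, and also the capacity route you sketch for case (i).

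What survives is your fallback for case (i): Gehring's reverse H\"older inequality plus Morrey's embedding does give $\diam f(B)\le C(n,K)\big(\int_{\sigma B}\abs{Df}^n\big)^{1/n}\le C\big(\int_{\sigma B}J_f\big)^{1/n}$, and that is a legitimate proof of (i). For (ii) the argument of \cite{KN} and \cite{Ae} is not a max--min comparison across spheres but an oscillation estimate \emph{on} spheres: for a.e.\ $t\in(r,\sigma r)$ one bounds $\operatorname{osc}(f,S(a,t))$ by a weighted integral of $\abs{Df}$ over $S(a,t)$, uses $\abs{Df}^n\le K_fJ_f$ together with H\"older's inequality with weight $K_f$ and Jensen's inequality against the convex function $t\mapsto\exp(\lambda t)$ -- the same H\"older--Jensen step that appears in the proof of Theorem~\ref{thm:main thm} here, and the source of the factor $\log(1/\diam B)^{1/n}$ -- then integrates over $t\in(r,\sigma r)$ to select a good sphere and finishes with the topological fact $\diam f(B)\le\diam f(\partial B(a,t))$ for open maps. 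You should replace your cross-scale step by this sphere-oscillation argument (or simply cite \cite{KN} for the estimate in this correct form).
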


Now we are ready to state a theorem which tells that a mapping of finite distortion can be extended to $\partial \Omega$ along John curves with a small exceptional set.


\begin{theorem}\label{thm:extension}
Assume that $f\colon\Omega\to\bR^n$ is a mapping of finite distortion with $J_f\in L^1(\Omega)$. Let $E_f$ be the set of points $w\in\partial\Omega$ for which there exists a curve $\gamma\in \textup{I}\mathopen(w,x_0\mathclose)$ so that $f$ does not have a limit along $\gamma$. Then we have the following:
\begin{enumerate}[i)]
 \item If $f$ is quasiregular and $h$ is a doubling gauge function satisfying
 \begin{equation}\label{qrgauge}
 \int_0 h(t)^{1/(n-1)}\frac{\,dt}{t}<\infty
 \end{equation}
 then $\mathcal{H}^h(E_f)=0$.
 \item If $f$ has exponentially integrable distortion and $h$ is a doubling gauge function satisfying
 \begin{equation}\label{mofdgauge}
 \int_0 \big(h(t)\log\left(1/t\right)\big)^{1/(n-1)}\frac{\,dt}{t}<\infty
 \end{equation}
 then $\mathcal{H}^h(E_f)=0$. 
\end{enumerate}
\end{theorem}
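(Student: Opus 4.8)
The plan is to estimate, for each dyadic scale, the Hausdorff $h$-content of the set of boundary points whose John curves can carry a "large" discrete oscillation, and then sum a convergent series. First I would fix a mapping $f$ as in the statement and recall that by the remark after the definition of discrete length, $w \in E_f$ forces $\ell_d[f(\gamma)] = \infty$ for some $\gamma \in \mathrm{I}(w,x_0)$; in particular, for every $\varepsilon > 0$ the tail $\sum_{Q \in \mathscr{W}_{\geq j},\, Q \cap \gamma \neq \emptyset} \diam f(Q)$ exceeds $\varepsilon$ for all large $j$. So it suffices to cover, for each $j$, the set
\[
E_f^{(j)} = \Bigl\{ w \in \partial\Omega : \exists\, \gamma \in \mathrm{I}(w,x_0) \text{ with } \sum_{\substack{Q \in \mathscr{W},\ \diam Q \leq 2^{-j}\\ Q \cap \gamma \neq \emptyset}} \diam f(Q) > \varepsilon \Bigr\},
\]
and show $\sum_j \mathcal{H}^h(E_f^{(j)})$ is controlled, so that $E_f \subset \bigcap_j E_f^{(j)}$ has $\mathcal{H}^h$-measure zero; here one really shows $\mathcal{H}^h_\infty(E_f^{(j)}) \to 0$, which suffices since $E_f \subset E_f^{(j)}$ for every $j$.

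The covering of $E_f^{(j)}$ comes from shadows: if $w \in E_f^{(j)}$ witnessed by $\gamma$, then $\gamma$ meets some $Q \in \mathscr{W}_k$ with $k \geq j$ such that $\diam f(Q)$ is comparatively large, hence $w \in S^{c_0}(Q)$, and by Lemma~\ref{ShadowProperties} $\diam S^{c_0}(Q) \leq C \diam Q$. The key point is a weighted counting estimate: using H\"older's inequality on the sum defining $\ell_d[f(\gamma)]$ together with the oscillation bound in Lemma~\ref{oscilla}, one distributes the "mass" $\varepsilon$ over scales. In the quasiregular case, $\diam f(Q) \leq C (\int_{\sigma Q} J_f)^{1/n}$; writing $\diam f(Q) = \diam f(Q)^{1/(n-1)} \cdot \diam f(Q)^{(n-2)/(n-1)}$ and comparing against the geometric weights $(2^{-k})^{1/(n-1)}$, an application of H\"older with exponents $n-1$ and $(n-1)/(n-2)$ gives
\[
\varepsilon^{n-1} \leq \Bigl( \sum_{Q \cap \gamma \neq \emptyset,\ \diam Q \leq 2^{-j}} \frac{\diam f(Q)^{n-1}}{(\text{weight})^{n-2}} \Bigr)\Bigl( \sum (\text{weight}) \Bigr)^{n-2},
\]
the second factor being bounded since $\gamma$ is a John curve (so it meets boundedly many Whitney cubes per scale, and the scales are summable). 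One then covers $E_f^{(j)}$ by the shadows $S^{c_0}(Q)$ of those "heavy" cubes $Q$, estimates $\mathcal{H}^h_\infty(E_f^{(j)}) \lesssim \sum_Q h(\diam Q)$ using the doubling property of $h$ and $\diam S^{c_0}(Q) \lesssim \diam Q$, bounds $h(\diam Q)$ via the heaviness of $Q$ and $\int_0 h(t)^{1/(n-1)}\,dt/t < \infty$, and uses $\sum_Q \int_{\sigma Q} J_f \lesssim \int_\Omega J_f < \infty$ (bounded overlap of dilated Whitney cubes). The tail of the convergent integral $\int_0 h(t)^{1/(n-1)} dt/t$ then drives $\mathcal{H}^h_\infty(E_f^{(j)}) \to 0$ as $j \to \infty$. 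The exponentially-integrable-distortion case is identical in structure, except Lemma~\ref{oscilla}(ii) inserts an extra factor $\log(1/\diam Q)^{1/n}$, which is exactly why the gauge condition is strengthened to $\int_0 (h(t)\log(1/t))^{1/(n-1)}\,dt/t < \infty$.

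The main obstacle I expect is bookkeeping the counting/H\"older step so that the exponents match up cleanly and the "weight" sums telescope with constants depending only on $c_0$, $n$ (and $K$ or $\lambda,\mathcal{K}$); in particular one must be careful that the number of cubes of each scale $2^{-k}$ that a single John curve can meet is bounded independently of $k$ (this follows from the John condition $d(\gamma(t),\partial\Omega)\geq t/c_0$ combined with the Whitney property $\diam Q \leq d(Q,\partial\Omega) \leq 4\diam Q$), and that summing over the relevant range of scales $k \geq j$ produces a geometric, hence finite and $j$-decaying, contribution. A secondary subtlety is passing from a single witnessing curve $\gamma$ to a countable or controlled cover of $E_f^{(j)}$: this is handled by the shadow formalism, since $w \in E_f^{(j)}$ already places $w$ in the shadow of a Whitney cube, and Lemma~\ref{JohnMeas}-type control (or simply Lemma~\ref{ShadowProperties}'s bounded multiplicity per scale) keeps the cover efficient.
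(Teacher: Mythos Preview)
Your approach is genuinely different from the paper's, and as written it has a real gap.

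The paper argues by contradiction via Frostman's lemma. One sets $A_k=\{w\in\bdary\Omega:\ell_d[f(\gamma)]\geq k\text{ for some }\gamma\in \textup{I}(w,x_0)\}$, assumes $\mathcal{H}^h(A_\infty)>0$, picks a Frostman measure $\mu$ on $A_k$ with $\mu(B(x,r))\leq h(r)$ and $\mu(A_k)\gtrsim\mathcal{H}^h_\infty(A_\infty)$, and then \emph{integrates} the inequality $\ell_d[f(\gamma_w)]\geq k$ over $A_k$. Swapping sum and integral turns this into $\mu(A_k)k\lesssim\sum_Q \mu(S_{A_k}(Q))\,\diam f(B_Q)$; one then applies H\"older with exponents $\tfrac{n}{n-1}$ and $n$ (matching the exponent $1/n$ in Lemma~\ref{oscilla}), and controls $\sum_Q \mu(S_{A_k}(Q))^{n/(n-1)}$ scale by scale using both the Frostman growth $\mu(S(Q))\leq Ch(\diam Q)$ and Lemma~\ref{JohnMeas}. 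This produces exactly $\sum_j h(2^{-j})^{1/(n-1)}$ (with the extra $j^{1/(n-1)}$ in case~(ii)), and the gauge hypothesis gives the contradiction $\mu(A_k)^{1/n}k\leq C$.

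Your plan tries to cover $E_f^{(j)}$ directly by shadows of ``heavy'' cubes. The problem is the step ``bounds $h(\diam Q)$ via the heaviness of $Q$'': you never specify how a single heavy cube is selected for each $w$, nor how the $h$-content of the resulting cover is controlled. If one makes this precise by pigeonholing over scales---choosing $\delta_k$ with $\sum_{k\geq j}\delta_k<\varepsilon$ so that some $Q\in\mathscr{W}_k$ along $\gamma$ has $\diam f(Q)>\delta_k/C$, then covering $E_f^{(j)}$ by the shadows of all such $Q$ and bounding the cardinality by Chebyshev and Lemma~\ref{oscilla}---one is forced to require $\delta_k\gtrsim h(2^{-k})^{1/n}$ to keep $\sum_k h(2^{-k})\delta_k^{-n}I_k$ finite (where $I_k=\sum_{Q\in\mathscr{W}_k}\int_{\sigma Q}J_f$). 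That in turn needs $\int_0 h(t)^{1/n}\,dt/t<\infty$, which is \emph{strictly stronger} than the hypothesis $\int_0 h(t)^{1/(n-1)}\,dt/t<\infty$ (take $h(t)=\log^{-s}(1/t)$ with $n-1<s\leq n$). Your H\"older with exponents $n-1$ and $(n-1)/(n-2)$ is applied along a single curve and does not by itself identify a cover; it also degenerates when $n=2$.

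The point is that the Frostman measure is doing real work: it lets one exploit simultaneously the pointwise bound $\mu(S(Q))\lesssim h(\diam Q)$ (for the $1/(n-1)$ power) and the additivity $\sum_{Q\in\mathscr{W}_j}\mu(S_E(Q))\lesssim\mu(E)$ (for the remaining power), which a cube-by-cube covering cannot replicate. To salvage the sharp exponent you should adopt the Frostman/contradiction structure.
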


\begin{proof}
We will follow the proof of Theorem 1.1 in \cite{KN} and actually prove that $\mathcal{H}^h(A_\infty)=0$, where $A_\infty$ is the set of points in $\xi\in \partial\Omega$ for which there is a curve $\gamma\in \textup{I}\mathopen(\xi,x_0\mathclose)$ so that $f(\gamma)$ has infinite discrete length. Note that $E_f\subset A_\infty$. On the contrary, assume that $\mathcal{H}^h(A_\infty)>0$. Then $\mathcal{H}^h(A_k)>0$, where $A_k$ is the set of points in $\xi\in\partial\Omega$ for which there exist $\gamma\in \textup{I}\mathopen(\xi,x_0\mathclose)$ so that 
$\ell_d\left[f(\gamma)\right]\geq k.$
Then by Frostman's lemma there exists a Borel measure $\mu$ supported in $A_k$ so that
$$\mu\big(B^n(x,r)\big)\leq h(r)$$
for every $B^n(x,r)\subset\bR^n$ and
$$\mu(E_k)\simeq \mathcal{H}^h_\infty(A_k)\geq\mathcal{H}_\infty^h(A_\infty)>0.$$
By the definition of $A_k$
\begin{align*}
\mu(A_k)k&\leq\int_{A_k}\ell_d\left[f(\gamma_w)\right]\,d\mu(w)\\
&\leq\int_{A_k}\sum_{\substack{Q\in\mathscr{W}\\ Q\cap\gamma_w\neq\emptyset}}\diam{f(Q)}\,d\mu(w).
\end{align*}

From now on, we assume that $f$ has exponentially integrable distortion. Let $\lambda_0>1$ be the constant in the definition of Whitney decomposition $\mathscr{W}$ for which the cubes $q\in\mathscr{W}$ dilated by a factor $\lambda_0$ have uniformly bounded overlap. If $Q\in \mathscr{W}$ then set $B_Q=B^n(x_Q,r_Q)$, where $x_Q$ is the center of $Q$ and $r_Q=\diam{Q}/2$. Using Lemma \ref{oscilla} for balls $B_Q$ we have the following chain of inequalities:
\begin{align*}
\mu(A_k)k&\leq \int_{A_k}\sum_{\substack{Q\in\mathscr{W}\\ Q\cap\gamma_w\neq\emptyset}}\diam{f(B_Q)}\,d\mu(w)\\
&\leq\sum_{Q\in\mathscr{W}}\int_{A_k}\chi_{S(Q)}(w)\,d\mu(w)\diam{f(B_Q)}\\
&\leq\sum_{Q\in\mathscr{W}}\mu\left(S_{A_k}(Q)\right)\diam{f(B_Q)}\\
&\leq C\sum_{Q\in\mathscr{W}}\mu\left(S_{A_k}(Q)\right)\log^{\frac{1}{n}}\left(\frac{1}{\diam{B_Q}}\right)\\
&\times\left(\int_{\lambda_0 B_Q}J_f(x)\,dx\right)^{1/n}\\
&\leq C\left(\sum_{Q\in\mathscr{W}}\mu\left(S_{A_k}(Q)\right)^{\frac{n}{n-1}}\log^{\frac{1}{n-1}}\left(\frac{1}{\diam{B_Q}}\right)\right)^{(n-1)/n}\\
&\times\left(\sum_{Q\in\mathscr{W}}\int_{\lambda_0 B_Q}J_f(x)\,dx\right)^{1/n}\\
&\leq C\left(\sum_{j=1}^\infty\sum_{Q\in\mathscr{W}_j}\mu\left(S_{A_k}(Q)\right)^{n/(n-1)}j^{1/(n-1)}\right)^{(n-1)/n}.
\end{align*}
Above we also used the fact that $\diam{B_Q}\simeq 2^{-j}$ for $Q\in\mathscr{W}_j$ and also the property that $\lambda_0 Q$'s have uniformly bounded overlap. The left hand side is approximated using Lemmas \ref{ShadowProperties} and \ref{JohnMeas} together with the doubling property of gauge function $h$
\begin{align*}
\sum_{Q\in\mathscr{W}_j}\mu\left(S_{A_k}(Q)\right)^{n/(n-1)}&\leq\max_{Q\in\mathscr{W}_k}\mu\left(S_{A_k}(Q)\right)^{1/(n-1)}\sum_{Q\in\mathscr{W}_j}\mu\left(S_{A_k}(Q)\right)\\
&\leq C(n,c)\max_{Q\in\mathscr{W}_k}\mu\left(S(Q)\right)^{1/(n-1)}\mu(A_k)\\
&\leq C(n,c,D)h(2^{-j})^{1/(n-1)}\mu(A_k).
\end{align*}
Putting these estimates together gives
\begin{align}\label{finalmeas}
\mu(A_k)k\leq C \mu(A_k)^{(n-1)/n}\left(\sum_{j=1}^\infty h(2^{-j})^{1/(n-1)}j^{1/(n-1)}\right)^{(n-1)/n}.
\end{align}
Since $h$ satisfies \eqref{mofdgauge} we know that the sum on the right hand side is finite (and independent of $k$) and thus $\mu^{1/n}(A_k)k\leq C$ for every $k\in \bN$ and thus $\mu(A_k)\to0$ as $k\to\infty$, which is a contradiction. Thus we have $\mathcal{H}^h(A_\infty)=0$ and the claim follows. For quasiregular $f$ we do not get the term $j^{1/(n-1)}$ in \eqref{finalmeas} and thus a gauge satisfying \eqref{qrgauge} gives the desired conclusion. \end{proof}


\begin{remark}
As a corollary of Theorem \ref{thm:extension} we get that the set $E_f$ has Hausdorff dimension zero as well as the set $A_\infty$. Moreover, for a quasiregular $f$ we have $\mathcal{H}^{h_\varepsilon}(E_f)=0$ for every $\varepsilon>0$, where $h_\varepsilon(t)=1/\log^{n-1+\varepsilon}(1/t)$. Also, if $f$ has exponentially integrable distortion, then $\mathcal{H}^{h_\varepsilon}(E_f)=0$ for every $\varepsilon>1$. In the quasiregular case this means that $E_f$ is of conformal capacity zero in terms of gauge functions, see \cite{Wa}.
\end{remark}


\subsection{Uniqueness of limits along John curves}\label{subsec:uniqueness limits}
Assume that $\Omega$ is a $c_0$-John domain. By the previous section we have a natural candidate for the extension of $f\in\mathcal{F}_{\lambda,\mathcal{K}}(\Omega)$ to $\partial\Omega$, that is $\bar{f}\colon\partial\Omega\to\bR^n$ so that
\begin{equation}
\label{extension}\bar{f}(w)=\lim_{t\to 0^+}f(\gamma_w(t)),
\end{equation}
where $\gamma_w$ is a $c_0$-John-curve connecting $x_0$ to $w$. The problem with this extension is that it might not be well-defined even for a smooth mapping $f:\Omega\to \bR^n$.
\begin{example}
Let $\Omega=\mathbb{D}\setminus L$, where $L$ is the (euclidean) line segment connecting $-\frac{1}{2}e_1$ to $\frac{1}{2}e_1$. Then $\Omega\subset\bR^2$ is a $c$-John domain with $x_0=-\frac{3}{4}e_1$. Set $A=B(0,1/4)\cap\{x_2>0\}$ and $B=B(0,1/4)\cap\{x_2<0\}$. There is a smooth mapping $f\colon\Omega\to\bR^2$ such that $f(A)=A+i$ and $f(B)=B-i$ and thus the extension $\bar{f}$ is not well-defined for any point in $(t,0)\subset\partial\Omega$, where $t\in(-1/4,1/4)$.
\end{example}

\begin{center}
	\begin{figure}[here]
	\input{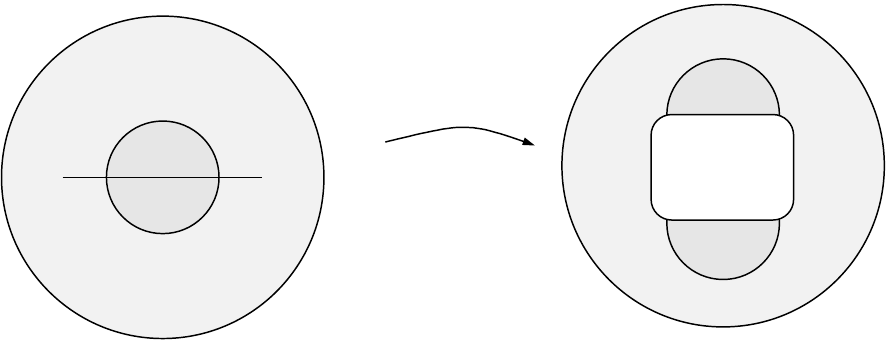}
		\caption{John is not sufficient}
		\label{fig:counterjohn}
	\end{figure}
\end{center}

In order to exclude such domains we have to impose more conditions on $\Omega$. Examples like this do not work if we strengthen our assumption from John condition to $\Omega$ being uniform. 
\begin{definition}
A bounded domain $\Omega\subset \bR^n$ is a $c$-uniform domain if there exists a constant $c>0$ such that each pair of points $x_1,x_2\in \Omega$ can be joined by a $c$-uniform curve $\gamma$ in $\Omega$, namely a curve $\gamma$ with the following two properties:
\begin{align}\label{eq:uniform domain 1}
l(\gamma)\leq c|x_1-x_2|,
\end{align}
\begin{align}\label{eq:uniform domain 2}
\min_{i=1,2} l(\gamma(x_i,x))\leq c d(x,\bdary\Omega) \quad \text{for all } x\in \gamma.
\end{align}

\end{definition}
These domains first appeared in the work of Gehring and Osgood \cite{go79}. Later these domains were used in many different contexts such as Sobolev extension domains and uniformizing Gromov hyperbolic spaces, for these see~\cite{j81,bhk01}. It should be noticed that quasihyperbolic geodesics between two points in a $c$-uniform domain are $c_1$-uniform curves with $c_1=c_1(n,c)$~\cite{go79}. We say that $\Omega$ is a John (or uniform) domain if it is $c$-John (or $c$-uniform) for some $c>0$.

We may assign for each $c$-uniform domain a center point $x_0$. Indeed, if $\Omega$ is a $c$-uniform domain, then we may fix a point $x_0\in \Omega$ such that $\Omega$ is a $c_1$-John domain with center $x_0$. Moreover, the John constant $c_1$ depends only on $c$ \cite[section 2.17]{Vai}. Since we are studying boundary extension of mappings, it is convenient to introduce the following concept.

\begin{definition}[Uniform domain with center]\label{def:uniform domain with center}
We say that $\Omega$ is a $c_0$-uniform domain with center $x_0$ if $\Omega$ is a uniform domain and as a John domain, it is $c_0$-John with center $x_0$. 
\end{definition}

As was observed in Section~\ref{subsec:exist limits}, for a given point $x\in\partial\Omega$ in a John domain $\Omega$, there are typically several John curves connecting $x$ to $x_0$. Thus it might happen that $f\in\mathcal{F}_{\lambda,\mathcal{K}}(\Omega)$ has different limits at $w\in\partial\Omega$ along different John curves in $\textup{I}\mathopen(w,x_0\mathclose)$; see for instance Figure~\ref{fig:counterjohn}. In the rest of this section, we will show that if $\Omega$ is $c_0$-uniform domain with center $x_0$, then mappings in $\mathcal{F}_{\lambda,\mathcal{K}}(\Omega)$ can be extended to $\partial\Omega$ in a unique way along curves in $\textup{I}\mathopen(w,x_0\mathclose)$.


\begin{theorem}\label{thm:uniqueness of limit in Lipschitz domains}
Let $\Omega\subset \bR^n$ be a $c_0$-uniform domain with center $x_0$, $f\in\mathcal{F}_{\lambda,\mathcal{K}}(\Omega)$ and $h$ a doubling gauge function satisfying \eqref{mofdgauge}. Then $f$ has a unique limit along curves $\gamma\in \textup{I}\mathopen(w,x_0\mathclose)$ for $\mathcal{H}^h$-a.e. $\xi\in\bdary\Omega$, i.e. if $\gamma,\eta\in\textup{I}\mathopen(\xi,x_0\mathclose)$ so that 
\begin{align*}
\lim_{t\to 0^+} f(\gamma(t))=a \text{ and } \lim_{t\to 0^+} f(\eta(t))=b.
\end{align*}
Then $a=b$. 
\end{theorem}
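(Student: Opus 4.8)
The plan is to reduce the whole statement to a single Frostman-type estimate in the spirit of the proof of Theorem~\ref{thm:extension}. Fix a constant $c'=c'(n,c,c_0)\geq c_0$, to be pinned down in the geometric step, and for $\xi\in\bdary\Omega$ put
\[
\Phi(\xi)=\sum_{Q\in P_{c'}(\xi)}\diam f(B_Q),
\]
where $B_Q=B^n(x_Q,r_Q)$ is the ball attached to the Whitney cube $Q$ as in the proof of Theorem~\ref{thm:extension}. Since each $S^{c'}(Q)$ is closed (Lemma~\ref{ShadowProperties}), $\Phi$ is Borel measurable and $E:=\{\xi\in\bdary\Omega:\Phi(\xi)=\infty\}$ is a Borel set. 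I will establish (i) that for $\xi\notin E$ all curves in $\textup{I}_{c_0}(\xi,x_0)$ produce the same limit, and (ii) that $\mathcal H^h(E)=0$. Together these prove the theorem: for $\xi\notin E$ every $\gamma\in\textup{I}_{c_0}(\xi,x_0)$ has $\ell_d[f(\gamma)]\leq\Phi(\xi)<\infty$, so $\lim_{t\to0^+}f(\gamma(t))$ exists.

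\emph{The geometric step (the main obstacle).} Fix $\xi\notin E$ and $\gamma,\eta\in\textup{I}_{c_0}(\xi,x_0)$, parametrised by arc length from $\xi$. For small $s>0$ we have $|\gamma(s)-\xi|\leq s$ and $|\eta(s)-\xi|\leq s$, hence $|\gamma(s)-\eta(s)|\leq 2s$. Using that $\Omega$ is $c$-uniform, join $\gamma(s)$ to $\eta(s)$ by a $c$-uniform curve $\sigma_s$; then $l(\sigma_s)\leq 2cs$ and $|\sigma_s|\subset B^n(\xi,(2c+1)s)$. Combining the uniform cone condition \eqref{eq:uniform domain 2} for $\sigma_s$ with the John inequality \eqref{John} at the endpoints (which give $d(\gamma(s),\bdary\Omega)\geq s/c_0$ and likewise for $\eta(s)$) yields $d(x,\bdary\Omega)\geq s/c_2$ for every $x\in|\sigma_s|$, with $c_2=c_2(c,c_0)$. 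From the standard properties of $\mathscr W$ it follows that every $Q\in\mathscr W$ meeting $\sigma_s$ satisfies $\diam Q\simeq s$ — so its scale $j$ obeys $j\geq j_0(s)$ for some $j_0(s)\to\infty$ as $s\to0$ — and that there are at most $C(n,c,c_0)$ of them. Finally, for each such $Q$, concatenating $\gamma|_{[0,s]}$, a round trip along $\sigma_s$ from $\gamma(s)$ to a point of $Q$ and back, and then $\gamma|_{[s,l_\gamma]}$, produces a curve in $\textup{I}_{c'}(\xi,x_0)$ meeting $Q$, with $c'=c'(n,c,c_0)$ (the John bound is immediate on the $\gamma$-portions, and on the $\sigma_s$-portions it follows from $d(\cdot,\bdary\Omega)\gtrsim s$ there together with $l(\sigma_s)\lesssim s$); hence $Q\in P_{c'}(\xi)$. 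Since $\gamma(s),\eta(s)\in|\sigma_s|$, $|\sigma_s|$ is connected, and $Q\subset B_Q$, the chaining argument used after the definition of $\ell_d$ gives
\[
|f(\gamma(s))-f(\eta(s))|\leq\diam f(|\sigma_s|)\leq\sum_{\substack{Q\in P_{c'}(\xi)\\ \diam Q\leq 2^{-j_0(s)}}}\diam f(B_Q).
\]
Since $\Phi(\xi)<\infty$, the right-hand side is a tail of a convergent series, hence tends to $0$ as $s\to0$; as $\lim_{t\to0^+}f(\gamma(t))$ and $\lim_{t\to0^+}f(\eta(t))$ exist, they must coincide.

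\emph{The measure step.} Suppose, for a contradiction, that $\mathcal H^h(E)>0$; then $\mathcal H^h_\infty(E)>0$, and Frostman's lemma furnishes a Radon measure $\mu$ supported in $E$ with $\mu(B^n(x,r))\leq h(r)$ for all $x,r$ and $\mu(E)\simeq\mathcal H^h_\infty(E)>0$. Since $\Phi\equiv\infty$ on $\supp\mu$, Tonelli's theorem gives
\[
\infty=\int_E\Phi(\xi)\,d\mu(\xi)=\sum_{Q\in\mathscr W}\diam f(B_Q)\,\mu\bigl(S^{c'}_E(Q)\bigr).
\]
Now bound $\diam f(B_Q)\leq C\bigl(\int_{\lambda_0 B_Q}J_f\bigr)^{1/n}j^{1/n}$ for $Q\in\mathscr W_j$ by Lemma~\ref{oscilla} ii) (using \eqref{perusexp} and $\log(1/\diam B_Q)\simeq j$), apply Hölder over all $Q\in\mathscr W$ with exponents $n$ and $n/(n-1)$, estimate $\sum_Q\int_{\lambda_0 B_Q}J_f\leq C(n)\int_\Omega J_f\leq C(n)\abs{\Omega}$ via \eqref{jacobi} and the bounded overlap of the dilated cubes, and handle the remaining factor scale by scale exactly as in the proof of Theorem~\ref{thm:extension}: Lemma~\ref{ShadowProperties}, the Frostman bound and the doubling of $h$ give $\max_{Q\in\mathscr W_j}\mu(S^{c'}_E(Q))\leq C\,h(2^{-j})$, while Lemma~\ref{JohnMeas} gives $\sum_{Q\in\mathscr W_j}\mu(S^{c'}_E(Q))\leq C\mu(E)$, so that
\[
\sum_{Q\in\mathscr W_j}\mu\bigl(S^{c'}_E(Q)\bigr)^{n/(n-1)}\leq C\,h(2^{-j})^{1/(n-1)}\mu(E).
\]
Altogether the right-hand side above is at most $C\abs{\Omega}^{1/n}\mu(E)^{(n-1)/n}\bigl(\sum_{j}(j\,h(2^{-j}))^{1/(n-1)}\bigr)^{(n-1)/n}$, and the last series is finite because $h$ satisfies \eqref{mofdgauge}. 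This contradicts the infinitude of the left-hand side, so $\mathcal H^h(E)=0$.

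\emph{Where the difficulty lies.} Everything rests on the geometric step: that at small scales the two John curves can be bridged by a curve staying at distance $\gtrsim s$ from $\bdary\Omega$ while remaining within $Cs$ of $\xi$, whose Whitney cubes are all reached from $\xi$ by $c'$-John curves with $c'$ depending only on $n,c,c_0$. This is exactly where the uniformity of $\Omega$ (rather than merely the John property) is used: in a domain that is only John, such as the one in Figure~\ref{fig:counterjohn}, any bridge between the two curves is forced to approach $\bdary\Omega$, the cubes it meets leave $P_{c'}(\xi)$, and the conclusion genuinely fails.
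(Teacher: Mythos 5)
Your proof is correct and follows essentially the same route as the paper: an exceptional set killed by a Frostman-measure/shadow computation identical to that of Theorem~\ref{thm:extension}, plus a geometric bridging step at each small scale producing a bounded chain of comparably-sized Whitney cubes that are all $c'$-shadowed by $\xi$ via the round-trip John-curve construction (this is the paper's Lemma~\ref{lemma:chaining1}, with a quasihyperbolic geodesic in place of your uniform curve). The one genuine, and welcome, variation is that you define the exceptional set through $\Phi(\xi)=\sum_{Q\in P_{c'}(\xi)}\diam f(B_Q)$ over all shadowing cubes rather than along individual curves as in the paper's Lemma~\ref{lemma:product tends to zero}; this makes the final deduction that the bridging terms vanish a clean statement about tails of a single convergent series, whereas the paper's appeal to its curve-wise lemma at that point is slightly less direct.
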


We need the following two lemmas to prove Theorem~\ref{thm:uniqueness of limit in Lipschitz domains}.

\begin{lemma}\label{lemma:product tends to zero}
Let $\Omega$ be a $c_0$-John domain with center $x_0$ and $c\geq c_0$. If $f$ has exponentially integrable distortion and $F_f$ is the set of points $\xi\in\partial\Omega$ for which there exists a curve $\gamma\in \textup{I}_c(\xi, x_0)$ so that 
\begin{align*}
	\sum_{i=1}^\infty a_i=\infty,
\end{align*}
then $\mathcal{H}^h(F_f)=0$, where $h$ is as in \eqref{mofdgauge}.
Above, $a_i$ is defined in the following way: for a fixed $\gamma\in \textup{I}_c(\xi, x_0)$ connecting $x_0$ to $\xi \in\partial\Omega$, we denote by $Q_i\in \mathcal{W}$ those cubes  that intersect $\gamma$ and labeled in the order from $x_0$ to $\xi$, and set 
\begin{align*}
	a_i=\left(\int_{\lambda_0 B_{Q_i}} J_f(x)\,dx\right)^{1/n}\log\left(1/\diam{B_{Q_i}}\right)^{1/n}.
\end{align*}

\end{lemma}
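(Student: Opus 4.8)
The plan is to mimic the proof of Theorem~\ref{thm:extension} essentially verbatim, noting that $a_i$ is precisely the quantity bounding $\diam f(B_{Q_i})$ via Lemma~\ref{oscilla}ii). First I would suppose for contradiction that $\mathcal{H}^h(F_f)>0$, and split $F_f=\bigcup_{k} F_k$, where $F_k$ is the set of $\xi\in\partial\Omega$ admitting a curve $\gamma=\gamma_\xi\in\textup{I}_c(\xi,x_0)$ with $\sum_i a_i\geq k$ (the decomposition is valid because if $\sum a_i=\infty$ for some curve then the partial sums exceed any $k$). Then $\mathcal{H}^h(F_k)>0$ for some $k$, so by Frostman's lemma there is a Borel measure $\mu$ supported on $F_k$ with $\mu(B^n(x,r))\leq h(r)$ for all balls and $\mu(F_k)\simeq\mathcal{H}^h_\infty(F_k)>0$.

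Next I would run the chain of inequalities exactly as in Theorem~\ref{thm:extension}. By definition of $F_k$,
\begin{align*}
\mu(F_k)\,k&\leq\int_{F_k}\sum_{\substack{Q\in\mathscr{W}\\ Q\cap\gamma_w\neq\emptyset}}\left(\int_{\lambda_0 B_Q}J_f(x)\,dx\right)^{1/n}\log\left(\frac{1}{\diam B_Q}\right)^{1/n}d\mu(w)\\
&\leq\sum_{Q\in\mathscr{W}}\mu\left(S_{F_k}^c(Q)\right)\left(\int_{\lambda_0 B_Q}J_f(x)\,dx\right)^{1/n}\log\left(\frac{1}{\diam B_Q}\right)^{1/n}.
\end{align*}
Applying H\"older's inequality with exponents $n/(n-1)$ and $n$, using $\sum_{Q}\int_{\lambda_0 B_Q}J_f\leq C\|J_f\|_{L^1(\Omega)}$ by the bounded overlap of the dilated Whitney cubes, and then estimating $\sum_{Q\in\mathscr{W}_j}\mu(S_{F_k}^c(Q))^{n/(n-1)}\leq C(n,c,D)\,h(2^{-j})^{1/(n-1)}\mu(F_k)$ via Lemmas~\ref{ShadowProperties} and \ref{JohnMeas} together with the doubling property of $h$, I arrive (using $\diam B_Q\simeq 2^{-j}$ for $Q\in\mathscr{W}_j$, so $\log(1/\diam B_Q)\simeq j$) at
\begin{align*}
\mu(F_k)\,k\leq C\,\mu(F_k)^{(n-1)/n}\left(\sum_{j=1}^\infty h(2^{-j})^{1/(n-1)}j^{1/(n-1)}\right)^{(n-1)/n}.
\end{align*}
The series converges and is independent of $k$ because $h$ satisfies \eqref{mofdgauge} (this is the integral test comparison $\sum_j h(2^{-j})^{1/(n-1)}j^{1/(n-1)}\simeq\int_0(h(t)\log(1/t))^{1/(n-1)}\frac{dt}{t}$). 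Hence $\mu(F_k)^{1/n}k\leq C$ for every $k$, forcing $\mu(F_k)\to 0$, contradicting $\mu(F_k)>0$. Therefore $\mathcal{H}^h(F_f)=0$.

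I do not expect any genuine obstacle here; the lemma is a bookkeeping variant of Theorem~\ref{thm:extension}, the only point requiring a word of care being the decomposition $F_f=\bigcup_k F_k$ — one must observe that divergence of $\sum a_i$ along a curve $\gamma$ means every tail sum, hence every partial sum from the $x_0$-end, eventually exceeds $k$, so the same curve $\gamma$ witnesses membership of $\xi$ in $F_k$ for every $k$, and the finitely many initial terms cause no trouble since each $a_i$ is finite. If one prefers, the lower bound $\sum_i a_i\geq k$ can be replaced by the estimate $\ell_d^{(\exp)}[f(\gamma)]:=\sum_{Q\cap\gamma\neq\emptyset}a_Q\geq k$ so that the shadow argument applies with no reindexing at all.
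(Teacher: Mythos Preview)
Your approach is correct and essentially identical to the paper's: the paper's proof simply says ``follow the proof of Theorem~\ref{thm:extension} to obtain $\mu(A_k)k\leq C$'', and you have written out precisely those details. One small wording slip: the relation is $F_f\subset F_k$ for every $k$ (as you yourself note at the end, the same curve witnesses membership in every $F_k$), not $F_f=\bigcup_k F_k$; consequently the Frostman measure $\mu=\mu_k$ depends on $k$, and the contradiction comes from $\mathcal{H}^h_\infty(F_f)\lesssim \mu_k(F_k)\leq C k^{-n}\to 0$, not from a single fixed $\mu$.
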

\begin{proof}
Notice that if $\xi\in F_f$, then 
$$\sum_{i=1}^\infty\left(\int_{\lambda_0 B_{Q_i}} J_f(x)\,dx\right)^{1/n}\log\left(1/\diam{B_{Q_i}}\right)^{1/n}=\infty.$$
Let $A_k$ be the collection of points $\xi\in\partial\Omega$ for which there exists $\gamma_\xi\in \textup{I}_c(\xi,x_0)$ so that labeling as in the lemma we have
$$\sum_{i=1}^\infty\left(\int_{\lambda_0 B_{Q_i}} J_f(x)\,dx\right)^{1/n}\log\left(1/\diam{B_{Q_i}}\right)^{1/n}\geq k.$$
Then $F_f\subset A_\infty$ and thus it is enough to show that $\mathcal{H}^h(A_\infty)=0$. Towards a contradiction assume the opposite $\mathcal{H}^h(A_\infty)>0$ and denote by $\mu$ the Frostman measure whose support lies in $A_k$, which satisfies $\mu(A_k)\simeq\mathcal{H}_\infty^h(A_k)\geq\mathcal{H}_\infty^h(A_\infty)$ and
$$\mu(B(x,r))\leq Ch(r)$$
for every ball $B(x,r)$. Then we may follow the proof of Theorem \ref{thm:extension} to obtain
$$\mu(A_k)k\leq C$$
with a universal constant $C>0$. Thus we have a contradiction and the lemma is proved.
\end{proof}


\begin{lemma}\label{lemma:chaining1}
Let $\Omega\subset\bR^n$ be a $c_0$-uniform domain with center $x_0$ and $\mathcal{W}$ a Whitney decomposition of $\Omega$. Then there exists a constant $C$ such that for any $s>0$ and pair $x,\tilde{x}\in\Omega$ with $d(\tilde{x},\partial\Omega)\geq s$, $d(x,\partial\Omega)\geq s$ and $\abs{x-\tilde{x}}\leq c_1s$ there exists a chain of Whitney cubes $\{Q_k\}_{k=1}^N$ connecting points $x$ and $\tilde{x}$ such that the number of cubes is uniformly bounded with respect to $s$, i.e. $N\leq C$, where $C$ depends only on $n$, $c_0$ and $c_1$ but not on $s$. Moreover if $\xi\in S(Q)$ for some $Q\in\cup_{i=1}^N Q_i$ then $\xi\in S^{c'}(\tilde{Q})$ for any $\tilde{Q}$ from this collection with $c'=c'(C,n,c_0)\geq c_0$.
\end{lemma}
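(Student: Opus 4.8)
The plan is to realize the chain as a connected sub-chain of the family of Whitney cubes met by a \emph{uniform} curve joining $x$ to $\tilde x$, and to deduce the last assertion by grafting a short detour along this chain onto a John curve witnessing a given shadow membership.

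\emph{Building the chain.} First I would join $x$ and $\tilde x$ by a $c_0$-uniform curve $\gamma$ in $\Omega$ (here $c_0$ is the uniformity constant of $\Omega$). By \eqref{eq:uniform domain 1}, $l(\gamma)\le c_0|x-\tilde x|\le c_0c_1s$. For $z\in\gamma$, let $a,b$ be the lengths of the two subarcs of $\gamma$ cut off by $z$, so $a+b=l(\gamma)$; then \eqref{eq:uniform domain 2} gives $d(z,\partial\Omega)\ge\min(a,b)/c_0$, while $|z-x|\le a$, $|z-\tilde x|\le b$ and $d(x,\partial\Omega),d(\tilde x,\partial\Omega)\ge s$ give $d(z,\partial\Omega)\ge s-\min(a,b)$; combining these yields $d(z,\partial\Omega)\ge s/(2c_0)$ on all of $\gamma$. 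Consequently, if a Whitney cube $Q$ meets $\gamma$ and $z\in Q\cap\gamma$, then from $d(z,\partial\Omega)\le\diam Q+d(Q,\partial\Omega)\le 5\diam Q$ we get $\diam Q\ge s/(10c_0)$, while $\diam Q\le d(z,\partial\Omega)\le\min_{w\in\gamma}d(w,\partial\Omega)+l(\gamma)$; since the smallest cube meeting $\gamma$ contains a point of $\gamma$ at distance $\le 5\ell_-$ from $\partial\Omega$ (with $\ell_-$ the smallest diameter of a cube meeting $\gamma$) and $l(\gamma)\le c_0c_1s\le 10c_0^2c_1\,\ell_-$, all cubes meeting $\gamma$ have diameters comparable up to a factor $R=R(n,c_0,c_1)$.

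\emph{Counting and extracting the chain.} To bound their number, subdivide $\gamma$ into $\le 1+10c_0^2c_1$ subarcs of diameter $\le\ell_-$; each such subarc meets at most $C(n)$ cubes of any single Whitney generation, and only $\lfloor\log_2R\rfloor+1$ generations occur, so $\gamma$ meets at most $N=N(n,c_0,c_1)$ cubes. The cubes meeting $\gamma$ form a connected subgraph of the Whitney adjacency graph (otherwise $\gamma$ would be covered by two disjoint closed sets each meeting it, contradicting connectedness of $\gamma$), so I can select an adjacency path inside this family from the cube $Q_1\ni x$ to the cube $Q_N\ni\tilde x$; this is the chain, $N\le C$, and since consecutive chain cubes touch, their diameters are comparable up to a factor $5^{C}$.

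\emph{The shadow statement.} Suppose $\xi\in S(Q_i)$ for a chain cube $Q_i$; by definition there is a $c_0$-John curve $\sigma\colon[0,l]\to\Omega$, $\sigma(0)=\xi$, $\sigma(l)=x_0$, $d(\sigma(t),\partial\Omega)\ge t/c_0$, with $p:=\sigma(t^*)\in Q_i$ for some $t^*$. Given any other chain cube $Q_j$, form $\sigma'$ by following $\sigma$ on $[0,t^*]$, then a polygonal detour $\tau$ running from $p$ through the centers and the pairwise intersection points of the chain cubes from $Q_i$ to $Q_j$ and back to $p$ (so $\tau\subset\bigcup_kQ_k$), and finally $\sigma$ on $[t^*,l]$. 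Then $d(\cdot,\partial\Omega)\ge\ell_-$ along $\tau$ and $l(\tau)\le C_1(C)\,\ell_-$, whereas $t^*\ge|p-\xi|\ge d(p,\partial\Omega)\ge\diam Q_i\ge\ell_-$ and $t^*\le c_0\,d(p,\partial\Omega)\le 5c_0\diam Q_i\le C_2(C,c_0)\,\ell_-$. Using these, one checks $d(\sigma'(t),\partial\Omega)\ge t/c'$ on each of the three portions of $\sigma'$: on the first it is the $c_0$-John bound; on $\tau$ it follows from $d\ge\ell_-$ and $t\le t^*+l(\tau)\le C_3(C,n,c_0)\,\ell_-$; on the last portion, writing $t=t'+l(\tau)$ with $t'\ge t^*$, from $d(\sigma'(t),\partial\Omega)\ge t'/c_0$ together with $l(\tau)/t^*\le C_1(C)$ — all valid once $c'=c'(C,n,c_0)\ge c_0$ is taken large enough. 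Hence $\sigma'\in\textup{I}_{c'}(\xi,x_0)$ meets $Q_j$, i.e.\ $\xi\in S^{c'}(Q_j)$; and $S(Q_i)=S^{c_0}(Q_i)\subset S^{c'}(Q_i)$, so the conclusion holds for every $\tilde Q$ in the chain.

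\emph{Main obstacle.} I expect the delicate step to be the John estimate on the portion of $\sigma'$ just after the grafted detour: there $d(\sigma'(t),\partial\Omega)$ reverts to the value prescribed by $\sigma$ while the arclength parameter has been shifted up by $l(\tau)$, so one needs $l(\tau)\lesssim t^*$; this is exactly why the detour is attached at a point of $\sigma$ lying in the definitely-sized cube $Q_i$ (which forces $t^*\ge\ell_-\gtrsim l(\tau)/C$) rather than near $\xi$. A secondary subtlety is that $d(x,\partial\Omega)$ is not assumed comparable to $s$; this is absorbed by the observation that $l(\gamma)\le c_0c_1s$ confines $\gamma$, and hence every cube it meets, to a region where $d(\cdot,\partial\Omega)$ varies only by a factor depending on $n,c_0,c_1$.
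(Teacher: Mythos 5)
Your argument is correct, and its second half (the shadow statement) is essentially the paper's own construction: graft a there-and-back polygonal detour through the chain onto the John curve at the point where it meets $Q_i$, and verify the John inequality separately on the three portions, using that all chain cubes have diameter comparable to $\ell_-$ and that the attachment parameter satisfies $t^*\geq d(p,\partial\Omega)\gtrsim \ell_-\gtrsim l(\tau)/C$. Where you genuinely diverge is in producing the chain: the paper takes a quasihyperbolic geodesic $[x,\tilde x]$ and quotes two external facts --- that the number of Whitney cubes met by a quasihyperbolic geodesic is comparable to $k_\Omega(x,\tilde x)$, and the Bonk--Heinonen--Koskela bound $k_\Omega(x,\tilde x)\leq c'\log\bigl(1+|x-\tilde x|/\min\{d(x,\partial\Omega),d(\tilde x,\partial\Omega)\}\bigr)\leq c'\log(1+c_1)$ --- whereas you take a $c_0$-uniform curve and bound the cube count by hand, showing $d(\cdot,\partial\Omega)\geq s/(2c_0)$ along the curve, hence that all cubes met have diameters pinched between $\ell_-$ and $R(n,c_0,c_1)\ell_-$, and then counting per Whitney generation. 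Your route is more elementary and self-contained (it avoids the citations to \cite{kot01} and \cite{bhk01}), and it also makes explicit the connectivity argument needed to extract an actual adjacency chain, which the paper passes over silently; the paper's route is shorter and generalizes more readily to domains satisfying only a quasihyperbolic boundary condition, which is the extension the authors flag at the end of Section 4. Both yield the same dependence $C=C(n,c_0,c_1)$ and $c'=c'(C,n,c_0)$.
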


\begin{proof}
Let $s>0$ and fix $x,\tilde{x}\in\Omega$ with $d(\tilde{x},\partial\Omega)\geq s$, $d(x,\partial\Omega)\geq s$ and $\abs{x-\tilde{x}}\leq c_1s$. Connect $x$ to $\tilde{x}$ by a quasihyperbolic geodesic $[x,\tilde{x}]_k$ and enumerate the cubes $Q\in\mathcal{W}$, that intersect $[x,\tilde{x}]_k$, in a such way that $x\in Q_1$, $\tilde{x}\in Q_N$ and $Q_i\cap Q_{i+1}\neq\emptyset$ for every $i$. By \cite[page 421]{kot01} we know that the number of cubes in this chain is comparable to $k_\Omega(x,\tilde{x})$ and moreover, since $\Omega$ is uniform, by \cite[Lemma 2.13]{bhk01} we have a constant $c'(n,c_0)>0$ such that
\begin{align}\label{eq:bounds on qh length}
k_\Omega(x,\tilde{x})\leq c'\log\Big(1+\frac{\abs{x-\tilde{x}}}{\min\{d(x,\bdary\Omega),d(\tilde{x},\bdary\Omega)\}}\Big).
\end{align}
The first part of the lemma follows.

Next we prove the latter part. Since $\xi\in S(Q)$ for some $Q\in\cup_{i=1}^N Q_i$ there exists a curve $\gamma\in \textup{I}(\xi,x_0)$ such that $\gamma\cap Q\neq \emptyset$. Let $y\in Q\cap\gamma$. Let $\tilde{Q}$ be any other Whitney cubes from the collection and fix $z\in \tilde{Q}$. Let $\gamma_2$ denote the piecewise linear curve from $y$ to $z$ s.t. $\vert \gamma_2\vert\subset\cup_{i=1}^N Q_i $. We define curve $\tilde{\gamma}$ as follows. We first travel from $\xi$ along $\gamma$ to $y$ and denote this by $\gamma_1$. Next, follow $\gamma_2$ from $y$ to $z$ and denote this by $\gamma_2^+$, then go back from $z$ to $y$ along $\gamma_2$ and denote this by $\gamma_2^-$. Finally, follow $\gamma$ from $y$ to $x_0$ and denote this by $\gamma_3$. In this way we obtained curve $\tilde{\gamma}=\gamma_1\star \gamma_2^+\star \gamma_2^-\star \gamma_3$ connecting $\xi$ to $x_0$. To illustrate that $\tilde{\gamma}$ is a $c'$-John curve with $c'=c'(C,n,c_0)\geq c_0$, we assume that $\tilde{\gamma}:[0,l']\to \Omega$ is parametrized by arc length and we have to verify~\eqref{John} with $c'$.

For $t\in[0,l(\gamma_1)]$ we have
$$t\leq c_0 d(\gamma(t),\partial\Omega),$$
since $\gamma\in \textup{I}(\xi,x_0)$. Next assume that $t\in [l(\gamma_1),l(\gamma_1)+2l(\gamma_2)]$. First note that all the Whitney cubes in the collection have uniformly comparable size since the number $N$ is bounded from above uniformly by $C$ and adjacent Whitney cubes have comparable size and thus we may estimate  
$$l(\gamma_2)\leq c(n,C)d(\tilde{\gamma}(t),\partial\Omega)\quad\text{and}\quad d(y,\partial\Omega)\leq c(n,C)d(\tilde{\gamma}(t),\partial\Omega).$$
Using the above estimates, we have
\begin{align*}
t\leq l(\gamma_1)+2l(\gamma_2)\leq c_0d(y,\partial\Omega)+2l(\gamma_2)\leq c(C,n,c_0)d(\tilde{\gamma}(t),\partial\Omega).
\end{align*}
Last, assume that $t\in[l(\gamma_1)+2l(\gamma_2), l(\tilde{\gamma})]$. Now we have $\vert y-\tilde{\gamma}(t)\vert\leq c_0 d(\tilde{\gamma}(t),\partial\Omega)$, which imply
$$l(\gamma_2)\leq c(n,C)d(y,\partial\Omega)\leq c(n,C,c_0)d(\tilde{\gamma}(t),\partial\Omega)$$
and thus
\begin{align*}
t\leq c_0d(\tilde{\gamma}(t),\partial\Omega)+2l(\gamma_2)\leq c(n,C,c_0)d(\tilde{\gamma}(t),\partial\Omega).
\end{align*}
\end{proof}

\begin{proof}[Proof of Theorem~\ref{thm:uniqueness of limit in Lipschitz domains}]
Let $F_f$ denote the set defined in Lemma \ref{lemma:product tends to zero}. We know that $\mathcal{H}^h(F_f)=0$. Let $\xi\in\partial\Omega\setminus F_f$ and $\gamma\in\textup{I}(\xi,x_0)$ be such that 
$$\lim\limits_{t\to 0^+} f(\gamma(t))=a\in\bR^n.$$
We will show that $f$ has the same limit along any curve $\eta\in\textup{I}(\xi,x_0)$. Fix such a curve $\eta$. Given $r>0$, let $t_r,\tilde{t}_r\in[0,1]$ be such that  $\abs{\gamma(t_r)-w}=r$ and $\abs{\eta(\tilde{t}_r)-w}=r$. Since $\gamma,\eta$ are both $c_0$-John curves this implies that
$$d(\gamma(t_r),\partial\Omega)\geq \frac{r}{c_0}~~\text{and}~~d(\eta(\tilde{t}_r),\partial\Omega)\geq \frac{r}{c_0}.$$
We now apply Lemma~\ref{lemma:chaining1} with $s=r/c_0$ and for points $\gamma(t_r)$ and $\eta(\tilde{t}_r)$ to find a finite sequence of Whitney cubes $\{Q_i\}_{i=1}^N$ with $N\leq C$.
Choose $x_i\in Q_i$  so that $x_1=\gamma(t_r)$ and $x_{N}=\eta(\tilde{t}_r)$. Then by Lemma \ref{oscilla} we have
\begin{align*}
\abs{f(\gamma(t_r))-&f(\eta(\tilde{t}_r))}\leq\sum_{i=1}^{N}\abs{f(x_{i+1})-f(x_i)}\\
&\leq C_1\sum_{i=1}^{N} \left(\int_{\lambda_0 B_{Q_i}} J_f(x)\,dx\right)^{1/n}\log\left(1/\diam{Q_i}\right)^{1/n}\\
&\leq c(C,C_1)\max_{1\leq i\leq N}\left(\int_{\lambda_0 B_{Q_i}} J_f(x)\,dx\right)^{1/n}\log\left(1/\diam{Q_i}\right)^{1/n}.\numberthis\label{eq:osc}
\end{align*}
By Lemma \ref{lemma:chaining1} there is a constant $c'\geq c_0$ s.t. $\xi\in S^{c'}(Q_i)$ for every $i=1,\ldots,N$. Note that $N$ does not depend on $r$. It follows from Lemma~\ref{lemma:product tends to zero} that we may choose $r>0$ so small that
\begin{align*}
\max_{1\leq i\leq N}\left(\int_{\sigma Q_i} J_f(x)\,dx\right)^{1/n}\log\left(1/\diam{Q_i}\right)^{1/n}< \varepsilon.
\end{align*}
This together with \eqref{eq:osc} shows that 
\begin{align*}
\abs{f(\gamma(t_r))-f(\eta(\tilde{t}_r))}\to 0
\end{align*}
as $r\to 0$. From this we may conclude that 
$$\lim_{t\to 0^+} f(\gamma(t))=\lim_{t\to 0^+} f(\eta(t))$$
and the proof is complete.
\end{proof}


Based on Theorem~\ref{thm:uniqueness of limit in Lipschitz domains}, we introduce the following definition.

\begin{definition}[Boundary extension]\label{def:natural boundary extension}
Let $\Omega$ be a $c_0$-uniform domain with center $x_0$ and $f\in \mathcal{F}_{\lambda,\mathcal{K}}(\Omega)$. We denote by $\bar{f}(\xi)$ the unique limit of $f$ at $\xi\in \bdary\Omega$ along $c_0$-John curves. 
\end{definition}
Notice that by Theorem~\ref{thm:extension} and Theorem~\ref{thm:uniqueness of limit in Lipschitz domains}, $\bar{f}$ is well-defined $\mathcal{H}^h$-a.e. in $\bdary\Omega$. Note that the unit ball $\mathbb{B}\subset \bR^n$ is a 1-uniform domain with center at the origin. According to Definition~\ref{def:natural boundary extension}, $\bar{f}(w)$ is the unique limit of $f$ along  the unique 1-John curve, which is just the line segment $\overline{ow}$, thus in this case our Definition~\ref{def:natural boundary extension} coincides with the radial extension.


\section{Weighted capacity estimates in John domains}
In this section, we prove the following weighted capacity estimates in John domains. The techniques used here are rather standard and have been used effectively in~\cite{kot01,G13}. On the other hand, they give a new proof of the key level set estimate, namely Theorem 1.2 in~\cite{Ae2}, where $\Omega$ is the unit ball  $\mathbb{B}$. 

Recall that for disjoint compact sets $E$ and $F$ in $\overline{\Omega}$, we denote by $\capacity_{\omega}(E,F,\Omega)$ the $\omega$-weighted capacity of the pair $(E,F)$:
\begin{equation*}
\capacity_\omega(E,F,\Omega)=\inf_u \int_{\Omega}|\nabla u(x)|^ndx,
\end{equation*}
where the infimum is taken over all continuous functions $u\in W^{1,n}(\Omega)$ which satisfy $u(x)\leq 0$ for $x\in E$ and $u(x)\geq 1$ for $x\in F$.
\begin{theorem}\label{thm:main thm}
Let $\Omega\subset\bR^n$ be a $c_0$-John domain with diameter one. Let $E\subset\partial\Omega$ be a compact set satisfying $\mathcal{H}^h(E)>0$ with $h(t)=\log^{-n-1}(1/t)$.
Then 
\begin{align}\label{eq:unweighted capacity estimate}
\capacity(E,Q_0,\Omega)\geq C\Big(\log\frac{1}{\mathcal{H}^1_\infty(E)} \Big)^{1-n},
\end{align}
where $Q_0$ is the central Whitney cube containing the John center $x_0$. Moreover, if $\omega:\Omega\to [0,\infty]$ be a weight which is exponentially integrable, then
\begin{align}\label{eq:weighted capacity estimate}
\capacity_{1/\omega}(E,Q_0,\Omega)\geq C\Big(\log\frac{1}{\mathcal{H}^1_\infty(E)} \Big)^{-n}.
\end{align}
The constant $C$ in~\eqref{eq:unweighted capacity estimate} and~\eqref{eq:weighted capacity estimate} depends only on the data associated to $\Omega$ and $\omega$.
\end{theorem}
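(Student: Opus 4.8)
The plan is to reduce the weighted capacity bound to the unweighted one, and to prove the unweighted bound via a direct test-function construction along John curves combined with the shadow machinery from Section \ref{sec:boundary extension}. First I would treat the unweighted estimate \eqref{eq:unweighted capacity estimate}. Let $u \in W^{1,n}(\Omega)$ be continuous, admissible for the pair $(E,Q_0)$, i.e. $u\le 0$ on $E$ and $u\ge 1$ on $Q_0$; we want to lower-bound $\int_\Omega |\nabla u|^n$ in terms of $\mathcal{H}^1_\infty(E)$. For each $\xi\in E$, pick a $c_0$-John curve $\gamma_\xi\in \textup{I}(\xi,x_0)$ and let $\{Q\}$ run over the Whitney cubes meeting $\gamma_\xi$. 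Since $u$ goes from $\le 0$ near $\xi$ to $\ge 1$ at $x_0$, telescoping the oscillation of $u$ over this chain of cubes gives $1 \le \sum_{Q\cap\gamma_\xi\ne\emptyset}\operatorname{osc}_{\lambda_0 Q} u$, and each oscillation is controlled by $\operatorname{osc}_{\lambda_0 Q} u \le C\big(\int_{\lambda_0 Q}|\nabla u|^n\big)^{1/n}$ by the Sobolev–Poincaré inequality on the (comparable) balls $\lambda_0 B_Q$. Writing $a_Q = \big(\int_{\lambda_0 Q}|\nabla u|^n\big)^{1/n}$, this is exactly the discrete-length setup of Lemma \ref{lemma:product tends to zero} / Theorem \ref{thm:extension} with $J_f$ replaced by $|\nabla u|^n$: if $\sum_{Q\in P(\xi)} a_Q$ were small for $\xi$ ranging over a set of positive $\mathcal{H}^h$-measure we would get a contradiction via a Frostman measure on $E$.

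More precisely, I would run the following quantitative version. Using the standard dyadic level-set / stopping argument on the shadows, one writes
\begin{align*}
\mathcal{H}^1_\infty(E) \le \sum_{Q\in\mathscr{W}} \diam S(Q),
\end{align*}
restricted to a suitable subfamily, and then uses Lemmas \ref{ShadowProperties} and \ref{JohnMeas} (bounded overlap of shadows in each generation $\mathscr{W}_j$, and $\diam S(Q)\lesssim \diam Q \simeq 2^{-j}$) together with Hölder's inequality in the same pattern as in the proof of Theorem \ref{thm:extension}. The key point is that $\sum_{Q\in\mathscr{W}_j}a_Q^n \le \sum_{Q\in\mathscr{W}_j}\int_{\lambda_0 Q}|\nabla u|^n \le C\int_\Omega |\nabla u|^n =: C\,I$ by bounded overlap of the dilated cubes. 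Combining the chaining inequality $1\lesssim \sum_{Q\in P(\xi)}a_Q$, summed against a Frostman measure $\mu$ on $E$ with $\mu(B(x,r))\le h(r)=\log^{-n-1}(1/r)$, with the per-generation energy bound and the gauge summability $\sum_j h(2^{-j})^{1/(n-1)} j^{1/(n-1)} <\infty$ that enters through the $\log$ factors, yields $\mu(E) \lesssim I^{1/n}\big(\log\frac{1}{\mathcal{H}^1_\infty(E)}\big)^{(n-1)/n}$, after optimizing the truncation level $j_0 \sim \log\frac{1}{\mathcal{H}^1_\infty(E)}$ where one switches from "few large cubes" to "energy bound on small cubes." Rearranging gives $I \gtrsim \big(\log\frac{1}{\mathcal{H}^1_\infty(E)}\big)^{1-n}$, which is \eqref{eq:unweighted capacity estimate}.

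For the weighted estimate \eqref{eq:weighted capacity estimate}, the point is that replacing $\int_\Omega |\nabla u|^n$ by $\int_\Omega |\nabla u|^n/\omega$ only costs one extra power of the logarithm, coming from Hölder with the exponentially integrable weight $\omega$: in the per-cube step one writes $\int_{\lambda_0 Q}|\nabla u|^n \le \big(\int_{\lambda_0 Q}\frac{|\nabla u|^n}{\omega}\big)\cdot \|\omega\|_{\exp,\lambda_0 Q}$ with the Orlicz norm of $\omega$ on a Whitney cube of size $2^{-j}$ bounded by $C j$ (exactly as in Lemma \ref{oscilla} ii) versus i), where the extra $\log(1/\diam Q)^{1/n}$ appears). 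Propagating this extra factor $j^{1/n}$ through the same summation changes the exponent of the logarithm by one, giving $\big(\log\frac{1}{\mathcal{H}^1_\infty(E)}\big)^{-n}$ in place of $\big(\log\frac{1}{\mathcal{H}^1_\infty(E)}\big)^{1-n}$. The main obstacle I anticipate is the bookkeeping in the two-regime optimization over the truncation generation $j_0$ — making sure that the "large cube" contribution (bounded using $\mathcal{H}^1_\infty(E)$ and $\diam S(Q)\lesssim 2^{-j}$) and the "small cube" contribution (bounded using the energy $I$ and the gauge/log summability) balance to produce precisely the claimed power of $\log\frac{1}{\mathcal{H}^1_\infty(E)}$, with all constants depending only on $n$, $c_0$ and (for the weighted case) the exponential integrability data of $\omega$; the rest is a routine adaptation of the argument already carried out for Theorem \ref{thm:extension}.
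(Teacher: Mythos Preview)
Your proposal follows essentially the same route as the paper: chain along John curves through Whitney cubes, apply the Poincar\'e inequality cube-by-cube, integrate against a Frostman measure $\mu$ on $E$ with gauge $h(t)=\log^{-n-1}(1/t)$, and (in the weighted case) insert a Jensen/H\"older step using exponential integrability of $\omega$ to produce the extra factor $\log^{1/n}(1/\diam Q)$ per cube. The ingredients and the overall architecture match.

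The one place where your sketch is vaguer than the paper is the step you yourself flag as the ``main obstacle'': how to localize to cubes with $\diam Q\le \mathcal{H}^1_\infty(E)$. You describe this as a single two-regime optimization over a truncation generation $j_0$, balancing a ``large-cube'' contribution against a ``small-cube'' contribution. The paper handles this not by optimization but by a clean \emph{dichotomy}. For each $x\in E$ one stops the chain $P(x)$ at the last cube $Q_1$ with $\diam Q_1\ge \tfrac{1}{5}\mathcal{H}^1_\infty(E)$, and then splits into two cases according to the mean $u_{Q_1}$: if $u_{Q_1}\le \tfrac{1}{2}$ for every $x\in E$, the restricted chain $P'(x)$ (all cubes of diameter $\le \mathcal{H}^1_\infty(E)$) already carries oscillation $\ge \tfrac{1}{2}$, and your Frostman argument runs on those cubes alone, with the tail sum $\sum_{j\ge j_0} j^{-n/(n-1)}$ supplying the power of $\log\frac{1}{\mathcal{H}^1_\infty(E)}$; if instead some $u_{Q_1}\ge \tfrac{1}{2}$, then one connects $Q_1$ to $Q_0$ by a \emph{finite} chain of at most $C\log\frac{1}{\diam Q_1}$ Whitney cubes along a quasihyperbolic geodesic, and a direct H\"older estimate on that finite chain gives the bound with no Frostman measure needed. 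Your ``large-cube contribution bounded using $\mathcal{H}^1_\infty(E)$ and $\diam S(Q)\lesssim 2^{-j}$'' does not quite capture this second case; the point is not a shadow-size estimate but rather that the number of cubes in the finite chain is $\lesssim \log\frac{1}{\mathcal{H}^1_\infty(E)}$. Once you replace your optimization by this dichotomy, the bookkeeping becomes routine and the claimed powers of the logarithm fall out directly.
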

\begin{proof}
We only prove the weighted estimate~\eqref{eq:weighted capacity estimate}, since the proof of the unweighted case is similar (and simpler). 

Let $u\in W^{1,n}(\Omega)\cap C(\overline{\Omega})$ be a test function for the $\frac{1}{\omega}$-weighted $n$-capacity of the pair $(Q_0,E)$. For each $x\in E$, we may fix a $c_0$-John curve $\gamma$ joining $x$ to $x_0$  in $\Omega$ and define $P(x)$ to be the collection of Whitney cubes that intersect $\gamma$. We define a subcollection $P'(x)\subset P(Q(x))$ as follows: $P'(x)=\{Q_1,Q_2,\dots\}$ consists of a chain of Whitney cubes, which continue along $P(x)$ until it reaches the last cube $Q_1$ for which $\diam Q_1\geq \frac{1}{5}\mathcal{H}^1_\infty(E)$. Since adjacent Whitney cubes $Q$ and $Q'$ have $\diam Q\leq 5\diam Q'$, we must have $\diam Q\leq \mathcal{H}^1_\infty(E)$ for all $Q\in P'(x)$.

We first assume that for every $x\in E$ we have $u_{Q_1}\leq \frac{1}{2}$ for $Q_1\in P'(x)$. We claim that
\begin{equation}\label{eq:claim 1}
\int_{\Omega}|\nabla u(x)|^n\frac{1}{\omega(x)}dx\geq C\Big(\log\frac{1}{\mathcal{H}^1_\infty(E)} \Big)^{-n}.
\end{equation}
If $x\in E$ and $P'(x)=\{Q_1, Q_2,\ldots\}$ then $u(x)=1$ and thus by Lebesgue differentiation Theorem we have $\lim_{i\to \infty}u_{Q_i}=1$. The standard chaining argument involving Poincar\'e inequality~(see for instance \cite{kot01,hk96}) gives us the estimate
\begin{equation}\label{eq:chaining and poincare}
1\lesssim \sum_{\substack{Q\in P'(x)}} \diam Q\,\dashint_{Q}|\nabla u(y)|dy.
\end{equation}
Note that for each $Q\subset \Omega$, by H\"older's inequality,
\begin{align*}
\dashint_Q|\nabla u|dy&=\dashint_Q|\nabla u|\omega^{-\frac{1}{n}}\omega^{\frac{1}{n}}dy\\
&\leq \Big(\dashint_Q |\nabla u|^n\frac{1}{\omega}dy\Big)^{\frac{1}{n}}\Big(\dashint_Q \omega^{\frac{1}{n-1}}dy\Big)^{\frac{n-1}{n}}.
\end{align*}
Notice that the function $t\mapsto \exp(\lambda t^{n-1})$ is convex and we may use Jensen's inequality to deduce that
\begin{align*}
\Big(\dashint_Q \omega^{\frac{1}{n-1}}dy\Big)^{\frac{n-1}{n}}&\leq \Big(\lambda^{-1}\log\Big(\dashint_Q \exp(\lambda\omega) dy\Big)\Big)^{\frac{1}{n}}\\
&\leq C(\lambda,L)\Big(\log\frac{1}{\diam Q}\Big)^{\frac{1}{n}},
\end{align*}
where $L=\int_\Omega \exp(\lambda \omega)dx$. Plugging the above estimate in~\eqref{eq:chaining and poincare}, we obtain
\begin{align}\label{eq:final esimite of chaining and poincare}
1\leq C(\lambda,n)\sum_{Q\in P'(x)} \Big(\int_{Q} |\nabla u|^n\frac{1}{\omega}dy\Big)^{\frac{1}{n}}\log^{\frac{1}{n}}\frac{1}{\diam Q}.
\end{align}
By our assumption $\mathcal{H}^h(E)>0$ and thus we can choose a Frostman measure $\mu$ on the compact set $E$ with growth function $h(r)=\log^{-n-1}(1/r)$ i.e.  $\mu$ is a Borel measure on $\bR^n$ which satisfies the growth condition $$\mu(E\cap B(x,r))\leq h(r)$$ for every ball $B(x,r)$.
Integrating~\eqref{eq:final esimite of chaining and poincare} over the set $E$ with respect to the Frostman measure $\mu$, we see that
\begin{equation*}
\mu(E)\leq C\int_E \sum_{Q\in P'(x)}\log^{\frac{1}{n}}\frac{1}{\diam Q}\Big(\int_{Q}|\nabla u|^n\frac{1}{\omega}dy \Big)^{1/n}d\mu(x).
\end{equation*}
We now interchange the order of summation and integration to deduce that
\begin{equation*}
\mu(E)\leq C\sum_{\substack{Q\in \mathscr{W}\\
\diam Q\leq \mathcal{H}^1_\infty(E)}}\mu(S_E(Q))\log^{\frac{1}{n}}\frac{1}{\diam Q}\Big(\int_{Q}|\nabla u|^n\frac{1}{\omega}dy \Big)^{1/n}.
\end{equation*}
Applying H\"older's inequality to the right hand side leads to the following upper bound for $\mu(E)$
\begin{multline*}\label{eq:upper bound for measure of E}
\left(\sum_{\substack{Q\in \mathscr{W}\\
\diam Q\leq \mathcal{H}^1_\infty(E)}}\mu(S_E(Q))^{\frac{n}{n-1}}\log^{\frac{1}{n-1}}\frac{1}{\diam Q}\right)^{\frac{n-1}{n}}\Big(\sum_{Q\in \mathscr{W}}\int_{Q}|\nabla u|^n\frac{1}{\omega}dy\Big)^{1/n}\\
\leq \left(\sum_{\substack{Q\in \mathscr{W}\\
\diam Q\leq \mathcal{H}^1_\infty(E)}}\mu(S_E(Q))^{\frac{n}{n-1}}\log^{\frac{1}{n-1}}\frac{1}{\diam Q}\right)^{\frac{n-1}{n}}\Big(\int_{\Omega}|\nabla u|^n\frac{1}{\omega}dy\Big)^{1/n}.\\
\end{multline*}
The sum in the previous estimate can be dealt with in the following way: Choose $j_0\in \mathbb{N}$ with $j_0\geq C\log(1/\mathcal{H}^1_\infty(E))$ so that $\diam Q\leq \mathcal{H}^1_\infty(E)$ implies that $Q\in \mathscr{W}_j$ for some $j\geq j_0$. We claim that
\begin{equation}\label{overlap}
\sum_{j=j_0}^\infty\sum_{Q\in \mathscr{W}_j}\mu(S_E(Q))^{1+\frac{1}{n-1}}\log^{\frac{1}{n-1}}\frac{1}{\diam(Q)}\leq C\mu(E)j_0^{-\frac{n}{n-1}}.
\end{equation}
The growth condition on $\mu$ and Lemmas \ref{ShadowProperties} and \ref{JohnMeas} imply that
\begin{align*}
&\sum_{j=j_0}^\infty\sum_{Q\in \mathscr{W}_j}\mu(S_E(Q))^{1+\frac{1}{n-1}}\log^{\frac{1}{n-1}}\frac{1}{\diam(Q)}\\
&\leq \sum_{j=j_0}^\infty\sum_{Q\in \mathscr{W}_j}\mu(S_E(Q))\mu(S_E(Q))^\frac{1}{n-1}\log^{\frac{1}{n-1}}\frac{1}{\diam(Q)}\\
&\leq C\sum_{j=j_0}^\infty\sum_{Q\in \mathscr{W}_j}\mu(S_E(Q))h(\diam Q)^\frac{1}{n-1}\log^{\frac{1}{n-1}}\frac{1}{\diam(Q)}\\
&\leq C\sum_{j=j_0}^\infty\sum_{Q\in \mathscr{W}_j}\mu(S_E(Q))\log^{-\frac{n}{n-1}}\frac{1}{\diam Q}\\
&\leq C\sum_{j=j_0}^\infty j^{-\frac{n}{n-1}}\sum_{Q\in \mathscr{W}_j}\mu(S_E(Q))\leq C\mu(E)\sum_{j=j_0}^\infty j^{-\frac{n}{n-1}}.\\
\end{align*}
Inequality \eqref{overlap} follows from this since the sum on the right hand side converges.
Putting together all the estimates above yields
\begin{equation*}
\mu(E)^n\leq C\mu(E)^{n-1}\Big(\log\frac{1}{\mathcal{H}^1_\infty(E)}\Big)^{-n}\int_\Omega |\nabla u|^n\frac{1}{\omega}\,dy.
\end{equation*}
It follows that
\begin{equation*}
\int_\Omega |\nabla u|^n\frac{1}{\omega}\geq C\mu(E)\log^n\frac{1}{\mathcal{H}^1_\infty(E)}.
\end{equation*}
To get \eqref{eq:claim 1} from here we just note that by concavity of $h$
$$\mu(E)\geq C \mathcal{H}_\infty^h(E)\geq C\log^{-1-n}\left(\frac{1}{\mathcal{H}_\infty^1(E)}\right)$$
and thus we have
$$\int_\Omega |\nabla u|^n\frac{1}{\omega} \geq C\Big(\log\frac{1}{\mathcal{H}^1_\infty (E)} \Big)^{-n}.$$
We are left with the case that there is $x\in E$ so that $u_{Q_1}\geq \frac{1}{2}$. In this case, we have $u_{Q_0}=0$ and $u_{Q_1}\geq \frac{1}{2}$. By following the quasihyerbolic geodesic from $x_1$ to $x_0$, we can find a finite chain of Whitney cubes $\{\tilde{Q}_j\}_{j=0}^l$ which joins $Q_0$ to $Q_f$, namely, cubes $\{\tilde{Q}_j\}_{j=1}^l$ such that $\tilde{Q}_0=Q_0$ is the central cube, $\tilde{Q}_l=Q_1$.
Moreover,
$$\diam Q_1\lesssim \min\{\diam \tilde{Q}_j\}\quad \text{and}\quad l\lesssim \log\frac{1}{\diam Q_1},$$
where the last inequality follows from the fact that the length of a quasihyperbolic geodesic is comparable with the number of Whitney cubes it intersects. Then repeating the chaining argument for finite chain of cubes $\{Q_0,\ldots,Q_1\}$ gives the estimate
\begin{align*}
1&\lesssim \sum_{i=1}^l\diam \tilde{Q}_i\dashint_{\tilde{Q}_i}|\nabla u|dy\lesssim \sum_{i=1}^l\Big(\int_{\tilde{Q}_i} |\nabla u|^n\frac{1}{\omega}dy\Big)^{\frac{1}{n}}\Big(\log\frac{1}{\diam \tilde{Q}_i} \Big)^{\frac{1}{n}}\\
&\lesssim \Big(\log\frac{1}{\diam Q_1}\Big)^{\frac{1}{n}}\sum_{i=1}^l\Big(\int_{\tilde{Q}_i} |\nabla u|^n\frac{1}{\omega}dy\Big)^{\frac{1}{n}}\\
&\lesssim  \Big(\log\frac{1}{\diam Q_1}\Big)^{\frac{1}{n}}\Big(\sum_{i=1}^l \int_{\tilde{Q}_i} |\nabla u|^n\frac{1}{\omega}dy\Big)^{\frac{1}{n}}l^{(n-1)/n},
\end{align*}
from which we obtain 
\begin{align*}
\int_\Omega |\nabla u(x)|^n\omega^{-1}(x)dx\geq C\Big(\log\frac{1}{\diam Q_1} \Big)^{-n}\geq C\Big(\log\frac{1}{\mathcal{H}^1_\infty (E)} \Big)^{-n}
\end{align*} 
since $\diam Q_1\leq \mathcal{H}^1_\infty (E)$. 
\end{proof}
In Theorem~\ref{thm:main thm} we have assumed that $\Omega$ is a John domain. However, the proof works for more general domains. More precisely, it hold if $\Omega$ satisfies the quasihyperbolic boundary condition~\cite{kot01}. The only difference is that in Theorem~\ref{thm:extension}, one has to consider limits along, instead of John curves, quasihyperbolic geodesics to the boundary.

\begin{remark}\label{rmk:on weighted capacity in quasihyperbolic boundary condition}
Note that, in Theorem~\ref{thm:main thm}, we have assumed that $\diam \Omega=1$. This assumption was only used to ensure that $\log\frac{1}{\mathcal{H}^1_\infty(E)}$ is positive. For domains with different diameter, one just needs to replace the right-hand side of~\eqref{eq:unweighted capacity estimate} with 
\begin{align*}
C\Big(\log\frac{2\diam\Omega}{\mathcal{H}^1_\infty(E)} \Big)^{1-n},
\end{align*}
where the constant $C$ in front now depends additionally on $\Omega$ as well. Similar changes applies to~\eqref{eq:weighted capacity estimate}.

Another observation one has to notice is that if we replace the central Whitney cube $Q_0$ by a ball $B_0=B(x_0,r_0)$ in the capacity estimates in Theorem~\ref{thm:main thm}, then the estimates~\eqref{eq:unweighted capacity estimate} and~\eqref{eq:weighted capacity estimate} become
\begin{align}\label{eq:unweighted capacity estimate for ball}
\capacity(E,B_0,\Omega)\geq C\Big(\log\frac{2\diam \Omega}{\mathcal{H}^1_\infty(E)} \Big)^{1-n},
\end{align}
and
\begin{align}\label{eq:weighted capacity estimate for ball}
\capacity(E,B_0,\Omega)\geq C\Big(\log\frac{2\diam \Omega}{\mathcal{H}^1_\infty(E)} \Big)^{1-n},
\end{align}
respectively, where the constant $C$ in both~\eqref{eq:unweighted capacity estimate for ball} and~\eqref{eq:weighted capacity estimate for ball} now depends additionally on $\Omega$ and $r_0$. Indeed, notice that if $u_{B_0}=0$, then $u_{Q_0}\leq m_0=m(r_0)<1$. So we may repeat the proof of Theorem~\ref{thm:main thm} by considering two cases: 
\begin{itemize}
\item For every $x\in E$, $u_{Q_1}\leq \frac{1+m_0}{2}<1$ for $Q_1\in P'(x)$;
\item There exists a point $x\in E$ so that $u_{Q_1}\geq \frac{1+m_0}{2}>m_0$.
\end{itemize}
The proof of the first case proceeds identically as before since $\frac{1+m_0}{2}<1$. In the second case, since $u_{Q_1}\geq \frac{1+m_0}{2}>m_0$ and $u_{Q_0}\leq m_0$, we may proceed again as in the proof of Theorem~\ref{thm:main thm} using the same finite chaining argument. 
\end{remark}

\section{Proofs of Theorems~\ref{thm:regularity of boundary extension for qr} and~\ref{thm:regularity of boundary extension}}
We are ready to prove the main results. The proof is similar than the one given in \cite[Section 2]{Ae2}.
\begin{proof}[Proof of Theorems~\ref{thm:regularity of boundary extension for qr} and~\ref{thm:regularity of boundary extension}]
Let $\Omega$ be a $c_0$ uniform domain with center $x_0$. We will first prove the case $f\in\mathcal{F}_{\lambda,\mathcal{K}}(\Omega)$, the proof for $f\in\mathcal{F}_K(\Omega)$ is analogous. Assume that $\nu$ is a positive Borel measure satifying \eqref{sreg} for $s\in(0,n)$. Since $\partial\Omega$ is compact it follows from \eqref{sreg} that 
$$\nu(\partial\Omega)<\infty.$$
Furthermore, \eqref{sreg} together with the finiteness of $\nu(\partial\Omega)$ imply that for any Borel set $E\subset \partial\Omega$ we have
\begin{equation}\label{compmeas}
\nu(E)\leq C_\Omega \mathcal{H}_\infty^s(E). 
\end{equation}
We want to show that there are constants $\alpha>0$ and $C<\infty$ independent of $f$ such that
\begin{align*}
\int_{\bdary\Omega}\exp\left(\alpha|\bar{f}(w)|\right)d\nu(w)\leq C,
\end{align*} 
where $\bar{f}$ is the boundary extension given in Definition \ref{def:natural boundary extension}. By Fubini's theorem 
$$\int_{\bdary\Omega}\exp\left(\alpha|\bar{f}(w)|\right)d\nu(w)=\nu(\partial\Omega)+\int_1^\infty \nu(F_u)e^{\alpha u}\,du,$$
where $F_u=\left\{w\in\partial\Omega: \vert \bar{f}(w)\vert \geq u\right\}$. Thus by \eqref{compmeas} it is enough to bound
$$\int_1^\infty \mathcal{H}_\infty^s(F_u)e^{\alpha u}\,du.$$
Define 
$$
E_t=\{x\in \Omega:\abs{f(x)}=t\}
$$ 
and
$$
\mathcal{A}_{n-1}f(E_t)=\int_{S^{n-1}(0,t)}\text{card}f^{-1}(y)\,d\mathcal{H}^{n-1}(y).
$$
Since the functions in $\mathcal{F}_{\lambda,\mathcal{K}}(\Omega)$ are equicontinuous, we can find $r_0>0$ such that $\vert f(x)\vert \leq 1$ for any $x\in B(x_0,r_0)\subset\Omega$ and $f\in\mathcal{F}_{\lambda,\mathcal{K}}$. Here $r_0$ depends only on $n,\lambda, \mathcal{K}$. Let $\Gamma_{F_u}$ denote the collection of curves $\gamma\colon[0,1]\to\mathbb{R}^n$ that connect points in $B(x_0,r_0)$ to points in $F_u$. Then by \cite[Proposition 10.2]{Ri}
$$\textup{Cap}_{1/K}(B(x_0,r_0), F_u, \Omega)\leq \textup{Mod}_{1/K}(\Gamma_{F_u}).$$
Similarly as in \cite{Ae2, PR} using the test function
$$\rho(x)=\left(\int_1^s\frac{dt}{(\mathcal{A}_{n-1}f(E_t))^\frac{1}{n-1}}\right)^{-1}\frac{\vert Df(x)\vert}{(\mathcal{A}_{n-1}f(E_t))^\frac{1}{n-1}}$$
when $\vert f(x)\vert=t\in(1,s)$ and $\rho(x)=0$ otherwise
for the weighted modulus we have the following upper bound
$$\textup{Mod}_{1/K}(\Gamma_{F_u})\leq\left(\int_1^s\frac{dt}{(\mathcal{A}_{n-1}f(E_t))^\frac{1}{n-1}}\right)^{n-1}.$$
From Theorem \ref{thm:main thm} we have a lower bound
$$\capacity_{1/K}(B(x_0,r_0),F_s,\Omega)\geq C_2\Big(\log\frac{C_1}{\mathcal{H}^s_\infty(F_s)} \Big)^{-n},$$
see also Remark~\ref{rmk:on weighted capacity in quasihyperbolic boundary condition}.
These together with the estimate
$$(s-1)^{n/(n-1)}\leq\omega_n^{1/(n-1)}\int_1^s\frac{dt}{\mathcal{(A}_{n-1}f(E_t))^\frac{1}{n-1}}$$
following from H\"older's inequality, see \cite{Ae2, PR}, imply that
$$\mathcal{H}^s_\infty(F_s)\leq C_1 \exp\left(C_2\omega_n^{1/n}(s-1)\right),$$
from which we conclude that for $\alpha<C_2\omega_n^{1/n}$ we have
$$\int_1^\infty \mathcal{H}_\infty^s(E_s)e^{\alpha s}\,ds\leq C,$$
where the upper bound does not depend on $f$.
\end{proof}

\textbf{Acknowledgements}

The authors would like to thank Academy Professor Pekka Koskela and Professor Kai Rajala for many helpful discussions on the topic. We are very grateful to the anonymous referee for many useful comments that improve our exposition.

\bibliographystyle{amsplain}

\end{document}